\newcommand{\RR}{\mathbb{R}}
\newtheorem{theorem}{Theorem}
\newtheorem{lemma}{Lemma}
\numberwithin{equation}{section}
\theoremstyle{definition}
\newtheorem{definition}{Definition}
\def\RR{\mathbb{R}}
\title{Prediction with Expert Advice: a PDE
Perspective\footnote{This research was partially supported
by NSF grant DMS-1311833.}
}
\author{
Nadejda Drenska\footnote{Department of Mathematics,
University of Minnesota; ndrenska@umn.edu. This work is a refinement
of the first author's PhD thesis, {\it A PDE Approach to a Prediction
Problem Involving Randomized Strategies}, NYU, 2017.} \ and
Robert V. Kohn\footnote{Courant Institute of Mathematical Sciences, New
York University; kohn@cims.nyu.edu}
}
\date{\today}     
\begin{document}

\maketitle

\bigskip
\noindent

This work addresses a classic problem of online prediction with expert
advice. We assume an adversarial opponent, and we consider both the
finite-horizon and random-stopping versions of this zero-sum, two-person game.
Focusing on an appropriate continuum limit and using methods from optimal
control, we characterize the value of the game as the
viscosity solution of a certain nonlinear partial differential equation.
The analysis also reveals the predictor's and the opponent's minimax
optimal strategies. Our work provides, in particular, a continuum perspective
on recent work of Gravin, Peres, and Sivan (Proc SODA 2016). Our techniques
are similar to those of Kohn and Serfaty (Comm Pure Appl Math 2010), where
scaling limits of some two-person games led to elliptic or parabolic PDEs.


\section{Introduction}

Our work addresses a problem involving `prediction with expert
advice.' This is a well-established framework in which a player
tries to use `expert advice' to invest optimally (for the worst case
scenario) against an adversarial market. The measure of effectiveness
of the player's strategy is regret minimisation: performance under the
metric of `regret', or distance between a player's performance and that of
the (retrospectively) best-performing `expert'. We use linear
regret, in other words the difference between a player's loss and
an expert's loss. Here, `prediction' is not about modelling a time series
probabilistically; instead, the player tries to synthesise the
advice of the experts in a way that guarantees good performance
in a worst case setting.

We consider the following setup. There are two entities -- a 'player' and
a 'market' -- and a fixed number $n$ of 'experts'. The market chooses
which experts win or lose at every time step. The player chooses
which expert to listen to at each time step. The two entities'
optimal strategies are mixed, i.e. the strategies involve probability
distributions over the space of available outcomes. The player's goal
is to accumulate overall winnings as close as possible to those of
the best performing expert at the 'end' of the game (assuming that
the market works against the player). There are two variants: one
with a fixed stopping time ('the finite horizon problem') and one
where the stopping time is random with a constant probability of
stopping at every time step ('the geometric stopping problem').
The goal in each variant is to identify the optimal strategies
of the player and the market, as well as the associated value function.

The general approach is `numerical analysis in reverse' -- interpreting
each discrete formulation as a numerical scheme for an appropriate
nonlinear PDE. We prove that the solution to the discrete problem
is asymptotically close to the unique viscosity solution of the PDE;
as a result, knowledge of the PDE solution provides an indication
about the optimal strategy for the discrete game. The 'finite horizon
problem' leads to a parabolic PDE, whereas the 'geometric stopping
problem' is associated to an elliptic PDE.

The overall outline of our analysis is as follows.
Firstly, for each variant we define a discrete approximation scheme
associated with a dynamic programming principle for the game. For
the geometric stopping problem the existence of a solution to the
scheme is nontrivial. Its construction relies on a time dependent
problem which is run to equilibrium (or equivalently, a contraction
mapping argument). For the finite horizon problem, existence of
a solution to the scheme is easily established by induction.
Convergence of each scheme is obtained through standard viscosity
technology: the scheme is stable, monotone, and consistent, hence
its solution converges to the unique viscosity solution of the
PDE. (Our proof uses the framework of Barles and Souganidis \cite{BS},
adjusted to accommodate the special features of our problem.)
Finally, we give an explicit solution for the elliptic PDE
associated to the geometric stopping problem with three experts
(it is the continuous analogue of the solution obtained using
discrete methods in Gravin, Peres, and Sivan's paper \cite{GPS}).

Our work shows that although online machine learning is not in
any conventional sense a stochastic control problem, continuous
methods are useful for its analysis (in much the same way that
PDEs are useful for studying stochastic control). It should be
noted that we are not the first to apply PDE methods
to an online machine learning problem. Indeed, Kangping Zhu's
thesis \cite{Zhu} used PDE methods to achieve a similar
goal in a somewhat different setting.

To put this work in context, we briefly review some of the
machine learning literature on prediction with expert advice. Most of this
work focuses on regret bounds (e.g. using specific strategies to prove
upper bounds on the predictor's regret). A prediction
problem appears in Cover's article \cite{Cover} as far back as 1965,
where he establishes an $O(\sqrt{T})$ regret bound, where $T$ is the
number of rounds played; Cover also solves the problem for $n=2$.
A classical treatment is available in Cesa-Bianchi and Lugosi's
book \cite{CBL}; it outlines the theoretical foundation of the
area and provides a self-contained treatment of many results,
including an upper bound on the regret of order $O(\sqrt{T\log n})$,
proved using a well-chosen multiplicative weight algorithm.
Some earlier, foundational works include Vovk's \cite{Vovk} and
Littlestone and Warmuth's \cite{LW}; they introduced the weighted
majority algorithm as a method the predictor can use to weight the
experts' bids. Haussler et all \cite{HKW} achieve a
$\Omega(\sqrt{T\log n})$ regret bound in the case of absolute loss.
Abernethy et al \cite{A} consider a game played until a fixed
number of losses is incurred by an expert. Luo and Schapire \cite{LS}
investigate a version of the game with a randomly chosen final time.
In \cite{RSS} Rakhlin et al. present algorithms using ''random play
out''. A recent paper by Gravin, Peres, and Sivan \cite{GPS} analyzes
the same problems that we consider here. That work uses discrete methods
and connections to random walks; ours can be viewed as providing
its continuous-time analogue. For more detail on the relationship
between our work and \cite{GPS}, see Subsection \ref{GPScomp}.
Our PDE characterization of the value function has already seen an
interesting application: in \cite{BEZ}, Bayraktar et al use it
to obtain an explicit solution for the geometric stopping version
of the game with $n=4$ experts.

There are other instances in the literature where scaling
limits of multistep decision processes lead to parabolic
or elliptic PDEs. For example, the work of Kohn
and Serfaty on two-person game interpretations of motion
by curvature \cite{KS1} and many other PDE problems \cite{KS2} has this
character. So does the work of Peres, Sheffield, Schramm, and
Wilson connecting the `tug-of-war' game to the infinity-Laplacian
\cite{PS2} and the p-Laplacian \cite{PS1} (this work has
seen many extensions, e.g. \cite{APSS}, \cite{AS1}, \cite{LM}, \cite{NS}).

A particular advantage of our treatment is that it is not limited
to the classical payoff function in the online machine learning
literature, namely regret with respect to the best expert
$\varphi(x)=\max_k \{x_k\}$, where $x_k$ is regret with
respect to expert $k$. In fact, it works for a more general
class of payoff functions, namely functions $\varphi$ that
are globally Lipschitz continuous, non-decreasing, symmetric
in their dependent variables $x_k$, have linear growth at $\infty$,
and satisfy $\varphi (x_1+c, ... , x_n+c ) = \varphi(x_1,...,x_n) + c$.
Different choices of $\varphi$ represent generalizations of the
classic linear regret performance measure. We prove results for the
general class of payoff functions described above; we
restrict $\varphi$ to $\varphi(x)=\max_k \{x_k\}$ only
to find the explicit solution of the $n=3$ elliptic case.

The outline of this paper is as follows. In section \ref{NF} we
introduce notation and the discrete formulation of the problem we
wish to solve, as well as the dynamic programming principle (DPP)
for each case. In section \ref{DPP} we derive heuristically the
associated PDEs. In section \ref{GS} we prove that both in the
finite horizon and in the geometric stopping cases the discrete
dynamic programming principle introduced in section \ref{DPP}
has a unique at most linear growth solution. In section \ref{EU}
we cite results showing that each of our PDEs has a unique
solution among functions with at most linear growth. In
section \ref{CV} we relate the discrete solutions to the
solutions of the PDEs by proving that the solutions of the
appropriately scaled DPP solve the appropriate PDE in the
limit $\varepsilon \to 0$.  In section \ref{ES} we investigate
the particular case of $n = 3$ experts in the geometric
stopping problem, and provide an explicit formula for the
solution of the PDE.


\section{Notation and Formulation\label{chap:one}}\label{NF}

In this section we introduce our notation and formulate the two variants of our problem. We start in \ref{Not} with the basic setup; subsections \ref{fhp} and \ref{gsp} present the two classical variants of the game (described in detail, for example, in \cite{GPS}). Lastly, in \ref{sg} we present the scaled variants of the game.

\subsection{Notation}\label{Not}

We will be considering a game with randomized strategies but let us focus on a non-probabilistic set up first. There are two entities - a 'market' and a 'player' -- as well as $n$ experts denoted by $1, 2,..., n$. The game is played for $T$ rounds (in the 'finite horizon' problem), or else with a random stopping time (using a fixed probability $\delta$ of stopping at each time step -- we call this the 'geometric stopping' problem). At each round $t$, every expert $k$ makes a prediction (say, whether stock $k$ will go up or down), and the player chooses to follow a particular expert, say the $l$th one. The market determines the gains $v_{t,k}$ of each expert $k$ ($v_{t,k}=1$ if expert $k$ made an accurate prediction at round $t$ and $v_{t,k}=0$ otherwise). Then the outcomes of the player and the market are revealed. We denote by $x_k$ the player's 'regret with respect to expert $k$'; this is, by definition, expert $k$'s cumulative gains minus the player's cumulative gains. Thus the increment of $x_k$ at time $t$ is
$$\Delta x_{t,k}=v_{t,k} -v_{t,l}$$
if the player follows expert $l$. 

The game we study is similar to the one just described, except that the player and the market choose
{\it randomized} strategies:
\begin{itemize}
\item At each step $t$, without knowing the player's move, the market chooses a probability distribution $p_t$, over all the possible outcomes for the $n$ experts, which we represent by vectors $\vec v \in \{0,1\}^n$. (An
    outcome is thus a choice of the subset of experts making correct predictions; for example, if all the experts are correct then $v= \vec 1$ is the vector of ones.)

\item Simultaneously, at every turn $t$ without knowing the market's move, the player chooses a probability distribution over the $n$ experts, i.e. a vector $\vec \alpha_t = ( \alpha_{t,1}, \alpha_{t,2}, ... , \alpha_{t,n})$, where $\sum \alpha_{t,i} =1$ and $\alpha_{t,i} \geq 0$. Its meaning is that the player follows expert $l$ at time $t$ with probability $\alpha_{t,l}$ (obtaining the same outcome as expert $l$, namely $v_{t,l}$).

\item The player seeks to maximize (and the market seems to minimize) the expected final-time regret (the
expectation being taken with respect to probabilities associated with the randomized strategies).
\end{itemize}

The state variables $x_j$ for this game are the player's regret with respect to the $j^{th}$ expert, meaning expert's gain minus player's gain. At risk of redundancy, we emphasize that market and the player know they are playing against each other, and this influences their optimal strategies. The player chooses the probability distribution on $\vec \alpha_t$ so as to minimize her expected regret at the end of the game; meanwhile the market chooses the probability distribution $ p_t$ which maximizes expected regret at the end of the game. These distributions are not fixed throughout the game and will depend on various unknowns, and on which version of the game is being considered (the 'finite horizon' version or the 'geometric stopping' one).

For notational convenience, whenever we look at the player's optimization subject to $\sum \alpha_i =1$ and $\alpha_i \geq 0$, we will write this choice as  $\min_{player}$. Similarly, whenever the market chooses an optimal probability distribution $p$ on the set of all possible choices $v \in \{0,1\}^n$, we denote the market's maximization with $\max_{market}$. We write $\mathbf{E}$ for the expected value over the mixed strategies. Lastly, whenever the market chooses a probability distribution  $p$ on the set of all possible choices $v \in \{0,1\}^n$, subject to the condition of 'balance', i.e.
 \begin{equation} \label{bal}
\mathbf{E}_p [v_i] = \mathbf{E}_p [v_k]  \quad \forall i,k \quad \fbox{balance condition},
\end{equation}
we denote this by $\max_{balance}$.

As the final time measure of regret, we consider an arbitrary function $\varphi(x_1, ..., x_n)$ that satisfies the following properties:
\begin{align}
 & \text{$\varphi$ is globally Lipschitz continuous},\\
  &\text{non-decreasing in each variable},\\
 & \text{symmetric in its dependent variables}~ x_1, ..., x_n,\\
 & |\varphi(x)|\leq C_1|x|+C_2~~~ \text{(a consequence of (2.2)), and}\\
& \text{for every $c$ it holds that}~~ \varphi (x_1+c, ... , x_n+c ) = \varphi(x_1,...,x_n) + c.
\end{align}
One such function $\varphi$ is $\varphi(x) = \max_k x_k$.

\subsection{The Finite Horizon Problem}\label{fhp}

The finite horizon problem is to determine the player's expected regret (the value function of the game) and the associated optimal strategies for both the player and the market, provided that the game ends at an a priori fixed time $T$ and starts at time $t$ such that $t \in \mathbb{N}, t \leq T$ with initial regret vector $x$. One can write the value function through a dynamic programming principle (DPP): it is the expected payoff at final time, provided the player and the market play optimally against each other, in particular doing the best that could be done after one time step. Through the dynamic programming principle, the discrete finite horizon formulation becomes:

\begin{align}\label{TDPP}
 & w^d(t,x) = \min_{player} \max_{market} \mathbf{E}[w^d(t+1, x+ \Delta x)]  \\
 & w^d(T,x) = \varphi(x). \nonumber
\end{align}

\subsection{The Geometric Stopping Problem}\label{gsp}

The geometric stopping problem is to determine the player's expected regret (the value function) provided the game starts at regret vector $x$. The game either stops with probability $\delta$, $0<\delta<1$, in which case the payoff is $\varphi(x)$; or else it continues, with probability $1-\delta$, for at least one more round, with player and market playing against each other optimally. One can thus express the value function through a DPP:
\begin{equation} \label{GDPP}
u^d(x) = \delta \varphi(x) +(1-\delta)\min_{player} \max_{market} \mathbf{E}[u^d(x+\Delta x)].
\end{equation}
Observe that there is no time-dependence in this case. (The probability of stopping, $\delta$, is assumed constant, i.e. independent of time).

\subsection{The Scaled Games}\label{sg}

Since we are interested in the behavior of the games over long periods of time, we consider scaled versions 
of them. For the finite horizon problem we scale spatial steps to be $0$ and $\varepsilon$ (instead of $0$ and $1$) and time steps to be $\varepsilon^2$ (instead of $1$), so the game is played for $T/\varepsilon^2$ steps. The reason for this scaling is that we expect to obtain a parabolic PDE in the limit. Then, the analogue of  equation  (\ref{TDPP})  is:
\begin{align}\label{sTDPP}
 & w(t,x) = \min_{player} \max_{market} \mathbf{E}[w(t+\varepsilon^2, x+\varepsilon \Delta x)]  \\
 & w(T,x) = \varphi(x). \nonumber
\end{align}

For the geometric stopping case (\ref{GDPP}) we observe that the expected number of rounds until stopping is $1/ \delta$ , since the probability of stopping after any step is $\delta$. We choose, just as in the previous case, to have spatial steps $\varepsilon$, and a typical number of steps of order $\varepsilon^{-2}$ , hence we choose $\delta =\varepsilon^2$.
 The analogue of (\ref{GDPP}) is thus:
\begin{equation}\label{sGDPP}
 u(x) = \varepsilon^2 \varphi(x) +(1-\varepsilon^2)\min_{player} \max_{market} \mathbf{E}[u(x+\varepsilon \Delta x)].
\end{equation}

The goal of this work is to investigate the limiting behavior of the solutions of (\ref{sTDPP}) and (\ref{sGDPP}). A key observation is that the statements of the DPP, as $\varepsilon \to 0$ are semi-discrete numerical schemes for  corresponding PDEs. We prove that the solution of  (\ref{sTDPP}) converges  to that of the parabolic problem
\begin{align}
 & w_t(t,x) + \frac{1}{2}\max_{v\in\{0,1\}^n}\langle D^2w(t,x)\cdot v, v\rangle = 0, \label{parE} \\
 & w(T,x) = \varphi(x), \nonumber
\end{align}
as $\varepsilon$ goes to $0$, whereas the solution of (\ref{sGDPP}) converges to that of
\begin{equation}
u(x) = \varphi(x) +  \frac{1}{2}\max_{v\in\{0,1\}^n}\langle D^2u(x)\cdot v, v\rangle  \label{Equa}
\end{equation}
as $\varepsilon$ goes to $0.$

A central question is whether the scaled games are equivalent to the unscaled ones. Whenever $\varphi$ satisfies $\varepsilon \varphi(\frac{x}{\varepsilon}) = \varphi(x)$, the answer is yes. In particular it is true for the classical choice of regret  $\varphi(x)=\max_k \{x_k\}$. For the finite horizon case, let the discrete-in-time, continuous-in-space function $w^d$ solve (\ref{TDPP}) and define
$$ \tilde{w}(\tau, y) =\varepsilon w^d(\frac{\tau}{\varepsilon^2} ,\frac{y}{\varepsilon}).$$
It satisfies
$$\tilde{ w}(t,x) = \min_{player} \max_{market} \mathbf{E}[\tilde{w}(t+\varepsilon^2, x+\varepsilon \Delta x)]$$
with $\tilde{w}(T, x)=\varepsilon\varphi(\frac{x}{\varepsilon})$ at the final time. So if $\varepsilon\varphi(\frac{x}{\varepsilon})=\varphi(x)$, $\tilde{w}$ is the solution of  (\ref{sTDPP}).
The situation with the geometric stopping case is similar. We scale  $$\tilde{u}( y) =\varepsilon u^d(\frac{y}{\varepsilon})$$
and take $\delta = \varepsilon^2$ in (\ref{GDPP}). Then $\tilde{u}$ solves
$$\frac{1}{\varepsilon}\tilde{ u}(y) =\varepsilon^2\varphi(\frac{y}{\varepsilon})+ (1-\varepsilon^2) \min_{player} \max_{market} \mathbf{E}[\frac{1}{\varepsilon}\tilde{u}( y+\varepsilon \Delta y)].$$
Here, too, if $\varepsilon \varphi(\frac{x}{\varepsilon}) = \varphi(x)$, then $\tilde{u}$ solves the scaled DPP (\ref{sGDPP}).

\subsection{Balanced Strategies}

The goal of this subsection is to prove that for finite, positive $\varepsilon$ an optimal strategy of the market can be achieved using 'balanced strategies' (to be explained in the lemma below). The argument for the following lemma  generalizes an argument in \cite{GPS}.

\begin{lemma}\label{balance}
Let $w(x_1, x_2,..., x_n)$ be a function satisfying the following properties:
\begin{enumerate}
\item $w$ is monotone nondecreasing in each $x_i$
\item $w(x_1+c,x_2+c,...,x_n+c) = w(x_1,x_2,...,x_n)+c$ \ for all $c \in \mathbb{R}$.
\end {enumerate}
Then, the market has at least one optimal strategy for
\begin{equation}\label{opt1}
 \min_{player} \max_{market} \mathbf{E}w(x+\varepsilon \Delta x)
\end{equation}
that is balanced in the sense that $$\mathbf{E}[v_i] =\mathbf{E}[v_j].$$ for all $i$ and $j$.
\end{lemma}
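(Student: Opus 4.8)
The plan is to exploit the translation‑invariance property (2) to make the player's inner minimization explicit, and then to ``rebalance'' any optimal market strategy by a monotone mass‑transport argument that uses the monotonicity property (1).

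First I would carry out the reduction. Fix a market strategy, i.e.\ a probability distribution $p$ on $\{0,1\}^n$, and recall that if the player follows expert $l$ and the outcome is $\vec v$ then $\Delta x_k = v_k - v_l$. Applying property (2) with $c=-\varepsilon v_l$ gives
\[
w(x+\varepsilon\Delta x) = w(x_1+\varepsilon v_1,\dots,x_n+\varepsilon v_n) - \varepsilon v_l ,
\]
so for a player strategy $\vec\alpha$ (with $\sum_l\alpha_l=1$),
\[
\mathbf{E}[w(x+\varepsilon\Delta x)] = \mathbf{E}_p[w(x+\varepsilon\vec v)] - \varepsilon\sum_l \alpha_l\,\mathbf{E}_p[v_l].
\]
The first term does not depend on $\vec\alpha$, and minimizing the second over the simplex puts all the player's weight on an expert maximizing $\mathbf{E}_p[v_l]$; hence
\[
\min_{player}\mathbf{E}[w(x+\varepsilon\Delta x)] = \mathbf{E}_p[w(x+\varepsilon\vec v)] - \varepsilon\max_l \mathbf{E}_p[v_l] =: F(p).
\]
Thus the market's guaranteed payoff under $p$ is $F(p)$, an optimal market strategy is a maximizer of the continuous function $F$ over the compact simplex of distributions on $\{0,1\}^n$ (so one exists), and ``a balanced optimal strategy'' is precisely such a maximizer that also satisfies $\mathbf{E}_p[v_i]=\mathbf{E}_p[v_j]$ for all $i,j$.

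Next comes the rebalancing step, which is the heart of the argument. Starting from any optimal $p^\ast$, set $m=\max_l\mathbf{E}_{p^\ast}[v_l]$. For each coordinate $j$ with $\mathbf{E}_{p^\ast}[v_j]<m$, I modify the distribution by transporting a total mass $m-\mathbf{E}_{p^\ast}[v_j]$ (which is at most $1-\mathbf{E}_{p^\ast}[v_j]$, the mass currently carried by configurations with $v_j=0$) from those configurations to the configurations obtained by flipping that coordinate to $v_j=1$. This raises $\mathbf{E}[v_j]$ to exactly $m$, leaves every other marginal $\mathbf{E}[v_k]$ unchanged, and, because $w$ is nondecreasing in $x_j$ and $\varepsilon>0$, does not decrease $\mathbf{E}_p[w(x+\varepsilon\vec v)]$; since $\max_l\mathbf{E}_p[v_l]$ also stays equal to $m$, the value of $F$ does not decrease. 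Performing this operation for every coordinate that lies below $m$ (the operations do not interfere, each touching only its own marginal) produces a distribution $p'$ with all marginals equal to $m$, hence balanced, and with $F(p')\ge F(p^\ast)$. Since $p^\ast$ was optimal, $F(p')=F(p^\ast)$, so $p'$ is the desired balanced optimal market strategy.

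I expect the only real obstacle to be spotting the reduction to $F(p)$: once property (2) is used to absorb the player's own outcome $v_l$, the inner minimization collapses to a single $\max_l$, after which the argument is a routine monotone mass transport that uses only property (1). Two small points require care: each deficient marginal must be raised to \emph{exactly} $m$ rather than overshooting, since overshooting would increase $\max_l\mathbf{E}_p[v_l]$ and could lower $F$; and it is the sign $\varepsilon>0$ that makes ``$x_j\mapsto x_j+\varepsilon$'' the direction in which monotonicity of $w$ is favorable to the market.
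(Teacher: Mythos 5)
Your proposal is correct and follows essentially the same route as the paper: translation invariance collapses the player's contribution to $-\varepsilon\sum_l\alpha_l\mathbf{E}_p[v_l]$, the inner minimization concentrates on an expert with maximal marginal, and any deficient marginal is then raised by shifting mass from configurations with $v_j=0$ to their $v_j=1$ flips, using monotonicity of $w$. Your version is if anything slightly more careful than the paper's (explicitly quantifying the transported mass and noting that the maximal marginal must not be overshot), but the underlying argument is identical.
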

\begin{proof}
 Firstly,  we examine (\ref{opt1}), calling it `W'. Then,
\begin{eqnarray}
W &=& \min_{player} \max_{market} \mathbf{E}[w( x+\varepsilon \Delta x)].  \\
& =& \min_{player} \max_{market} \mathbf\sum_k \alpha_k\mathbf{E}_p[w( x+\varepsilon \vec v-\varepsilon\vec{1} v_k)]\\
& =& \min_{player} \max_{market} \left[ \mathbf\sum_k \alpha_k\mathbf{E}_p[w( x+\varepsilon \vec v)]-\varepsilon\sum_k \alpha_k\mathbf{E}_p[v_k] \right]\\
& =& \min_{player} \max_{market} \left[ \mathbf{E}_p[w(x+\varepsilon \vec v)]-\varepsilon\sum_k \alpha_k\mathbf{E}_p[v_k] \right]. \label{above}
\end{eqnarray}
Here $\alpha_k$ is the probability that the player follows expert $k$, $p$ is the market's probability distribution on the expert's outcomes, and $\vec 1 = (1,...,1).$
The equalities above follow by the definition of expected value, using translation invariance (i.e. property 2 above) and the fact that $\sum_k \alpha_k =1$.

Suppose there exists an optimal strategy $p$ for the market which is not balanced. We will construct an optimal strategy which is balanced.
Since the market is unbalanced, there exists an expert with a largest expected value, say it is expert $k$, i.e. $k=\mbox{argmax}_j({\mathbf{E}_p[v_j]})$. The expression \eqref{above} is a linear programming problem in $p$ and $\vec \alpha$, so $\min\max = \max\min$, i.e. the optimal strategies are unchanged if the player minimizes first. The player wants to minimize the second sum, because she has no influence over the first sum, so she may choose to follow expert $k$, i.e. she may choose $\alpha_k=1$. Pick an expert $i$ such that $\mathbf{E}_p[v_i] <\mathbf{E}_p[v_k]$; to simplify notation, suppose $i=1$ and we shall write $\mathbf{E}$ instead of $\mathbf{E}_p$. Then, consider the pair of market outcomes where the only difference is $v_1$'s value - 0 or 1. Observe that if the market increases the probability of a term where $v_1 =1$ at the expense of a term where $v_1=0$ , he increases $\mathbf{E}[w( x+\varepsilon \Delta x)]$, since, by monotonicity
\begin{align} w( x+\varepsilon (0,v_2,...v_n)-\varepsilon \vec{1}v_k)  \leq  w( x+\varepsilon (1,v_2,...v_n) -\varepsilon\vec1{ v_k}).
\end{align}
 By changing the probabilities of these two outcomes appropriately, the market obtains a strategy satisfying $\mathbf{E}[v_1] =\mathbf{E}[v_k]$ that is at least as good as the original one; note that the other expectations $\mathbf{E}[v_2], ...,\mathbf{E}[v_n]$ remain unchanged. Performing this operation for every $s$ such that $\mathbf{E}[v_s]<\mathbf{E}[v_k]$, we obtain a balanced strategy for the market which performs at least as well as the original optimal one.
\end{proof}


\section{Heuristic PDE Derivations\label{chap:two}}\label{DPP}

In  this section we use the DPP formulation to derive, at least heuristically, the associated PDEs. First we consider the geometric stopping case, then the finite horizon case.

\subsection{The PDE for Geometric Stopping Case}\label{PDEGS}

We  `derive' formally a limiting elliptic PDE. This derivation makes assumptions on the behavior of $u$, for example sufficient smoothness. For now the derivation is heuristic, but later on it will be justified, in the sense that we will prove that this game is a convergent numerical scheme for the PDE. Substituting the Taylor expansion of $u$ into the DPP \eqref{sGDPP} gives
\begin{eqnarray}\label{Tay}
u(x) &=&  \varphi(x) +\frac{(1-\varepsilon^2)\varepsilon}{\varepsilon^2} \min_{player} \max_{market} \mathbf{E}[\langle\nabla u , \Delta x \rangle+\frac{\varepsilon}{2} \langle D^2u \cdot \Delta x, \Delta x\rangle +  O(\varepsilon^2)].
\end{eqnarray}

As $\varepsilon \to 0$, the dominating term in the $\min\max$ is $\mathbf{E}[\langle\nabla u , \Delta x \rangle]$, so we focus on it:

\begin{eqnarray}
\mathbf{E}_{\vec{\alpha}, p}[\langle\nabla u , \Delta x \rangle]= \sum_{i=1}^{n} \alpha_i\mathbf{E}_{p}[\langle\nabla u , \vec v - v_i \cdot \vec 1\rangle] 
=  \sum_{i=1}^{n}[  \partial_i u - \alpha_i  \sum_{k=1}^{n} \partial_k u]\mathbf{E}_{p}[v_i].  \label{alpha}
\end{eqnarray}
The equality follows by linearity, inner product definition, rearranging, change of summation, and the fact that $\vec{\alpha}$ is a probability distribution.
We focus on the expression on the last line:
\begin{equation}\label{mm}
 \min_{player} \max_{market} \sum_{i=1}^{n}[  \partial_i u - \alpha_i  \sum_{k=1}^{n} \partial_k u]\mathbf{E}_{p}[v_i]. \end{equation}
This expression is a pair of dual linear programs in min max form, with variables $\vec\alpha$ and  $p$, which represent the player's and the market's probability distributions, respectively. As such,
$$\min_{player}\max_{market}=\max_{market}\min_{player}.$$
We prove in Subsection \ref{uwp} that $u_\varepsilon$ satisfies the following properties: monotonicity in each variable
and the translation property, i.e.
 \begin{equation}\label{tranp}
u(x_1+c,x_2+c,...,x_n+c) = u(x_1,x_2,...,x_n)+c.
\end{equation}
Later on, we will prove that $u_\varepsilon \to u$ and thus $u$ inherits those properties. Moreover, we are assuming for this heuristic discussion that $u$ is differentiable, so monotonicity turns into $\partial_ju\geq 0 ~~\forall~j$, whereas differentiating translation invariance, we obtain \begin{equation}\label{sum}
\sum_{i=1}^n\partial_i u=1.
\end{equation}
We claim that
\begin{enumerate}
\item the player's optimal strategy is $\alpha_i=\partial_iu$;
\item the market's optimal strategy is any probability distribution $p$ satisfying $\mathbf{E}_{p}[v_j] = \max_{k=1,...n}\mathbf{E}_{p}[v_k]$ for every $j$ such that $\partial_ju>0$; and
\item the value of the minmax in (\ref{mm}) is $0$.
\end{enumerate}
To prove 1, we observe that if  $\alpha_i=\partial_iu$, then (\ref{mm}) is $0$ for every choice of the market's strategy $p$. Suppose $\alpha_i \neq \partial_i u$. Since $\sum \alpha_i=1$, then there would exist a pair of indicies so that $(\partial_ju-\alpha_j )>0$ and $(\partial_k u-\alpha_k )<0.$ The market can take advantage of this and put all the weight into $v_j$, obtaining a positive contribution $(\partial_ju-\alpha_j )\mathbf{E}[v_j]>0,$ which is a worse outcome for the player. So the choice of  $\alpha_i=\partial_iu$ is superior to the player's other options.

To prove 2, we note that $\min_\alpha\sum(\partial_iu - \alpha_i)\mathbf{E}_{p}[v_i]$ attains the minimum when $\alpha_i \neq 0$ at summands where   $\mathbf{E}_{p}[v_j] = \max_{k=1,...n}\mathbf{E}_{p}[v_k]$.
Using $\sum \alpha_i=1$ and $\sum \partial_i u=1$, we obtain
$$\sum_{i=1}^n \partial_i u (\mathbf{E}_p[v_i]  - \max_{k=1,...n}\mathbf{E}_{p}[v_k]).$$
The maximal value the market can obtain is $0$, achieved when $$\mathbf{E}_p[v_i]  = \max_{k=1,...n}\mathbf{E}_{p}[v_k]$$ for all indices $i$ such that $\partial_i u>0$. If the market doesn't follow this strategy, the resulting value will be less than $0$. The proof of the claims is now complete.

Reviewing the preceding results, and assuming (as it seems natural) that $\partial_i u >0$ for all $i$, we see that the strategy of the player is fully determined:
\begin{eqnarray} \label{alpha}
 \alpha_i =  \partial _i u  \quad \fbox {market indifference condition,}
\end{eqnarray}
whereas the player influences (but doesn't fully determine) market's choices:
\begin{eqnarray} \label {Ep}
 \mathbf{E}_{p} [v_i] = \mathbf{E}_{p} [v_k]  \quad  \forall i,k \quad \fbox{balance condition}.
\end{eqnarray}
The optimal value of the $\min\max$ is $0$, so the $\varepsilon$ order term in the Taylor expansion vanishes. In order to obtain a PDE, we need to go to the second order of the Taylor expansion. We incorporate the knowledge of strategies of the player and the market by writing $\max_{balance}$ to indicate that $\alpha_i$ is determined by (\ref{alpha}) and $p$ is restricted to (\ref{Ep}).  Thus, we obtain:
\begin{eqnarray*}
u(x) &=&  \varphi(x) +\frac{1-\varepsilon^2}{2} \max_{balance} \mathbf{E}[  \langle D^2u \cdot \Delta x, \Delta x \rangle + O(\varepsilon)].
\end{eqnarray*}
In the limit $\varepsilon \to 0$ we obtain the equation
\begin{eqnarray}
u(x) = \varphi(x) - \mathcal{L}(u), \label{elPDE}
\end{eqnarray}
where
\begin{eqnarray}
 \mathcal{L}(u) &:=&-\frac{1}{2}\max_{balance}\mathbf{E}[  \langle D^2u \cdot \Delta x, \Delta x\rangle ] \label{Lu}  \\
&=&-\frac{1}{2}\max_{balance}( \sum_{i, j, k} \alpha_k\partial^2_{ij}u\mathbf{E}[(v_i-v_k)(v_j-v_k)]).
\end{eqnarray}

\subsection{The PDE for the Finite Horizon Problem}

Returning to the time dependent problem, we observe a lot of similarities. Again we start by substituting the Taylor expansion of $w$ into the DPP \eqref{sTDPP}; this gives
\begin{eqnarray*}
0 &=&  \min_{player} \max_{market} \mathbf{E}[\langle\nabla w , \Delta x \rangle+ \varepsilon(w_t +\frac{1}{2} \langle D^2w \cdot \Delta x, \Delta x\rangle) + O(\varepsilon^2)].
\end{eqnarray*}
Again, as $\varepsilon \to 0$, the dominating term is $\mathbf{E}_{\alpha, p}[\langle\nabla w , \Delta x \rangle]$.
The analysis of this term done in subsection \ref{PDEGS} applies here too. In particular,  the `market indifference' and the `balance' conditions are the same. This leaves the same restrictions over the $\min\max$ as in the previous case, hence the $\varepsilon^2$-order term has the same `balance' condition as in the previous case. This yields the limiting equation

\begin{eqnarray}
w_t - \mathcal{L}(w)=0, \label{parPDE}
\end{eqnarray}
for the operator $\mathcal{L}$ defined by (\ref{Lu}), with a final time condition
$$w(T,x) =\varphi(x).$$

\subsection{The Operator $ \mathcal{L}$} \label{el}

We need to understand the operator $\mathcal{L}(u)$.
Firstly, we investigate the expectation part. Let $p(v)$ be the probability of a particular vector $v \in \{0,1\}^n$, and let $\tilde v = \vec 1 - v$. Then,
$$ \mathbf{E}[(v_i-v_k)(v_j-v_k)] = \sum_{v \in \{0,1\}^n}(v_i-v_k)(v_j-v_k)p(v) = \sum_{v \in \{0,1\}^n}\mathbbm{1}_{v_i =v_j \neq v_k}(v)p(v),$$
where $\mathbbm{1}$ is the indicator function.

Substituting in $\mathcal{L}(u)$, we obtain
\begin{eqnarray} \label{sLu}
 \mathcal{L}(u) &=&- \frac{1}{2} \max_{balance}\sum_{i, j, k}  \sum_{v \in \{0,1\}^n}(\alpha_k\partial^2_{ij}u\mathbbm{1}_{v_i = v_j \neq v_k})p(v) \nonumber \\
&=&-\frac{1}{4} \max_{balance} \sum_{v \in \{0,1\}^n}\sum_{i, j, k} (\alpha_k\partial^2_{ij}u\mathbbm{1}_{v_i =v_j \neq v_k})(p(v)+p(\tilde v)),
\end{eqnarray}
since $\mathbbm{1}_{v_i =v_j \neq v_k}$ takes the same value for $v$ and for $\tilde{v} = \vec 1 - v$ (for any triplet $i,j,k$). In view of (\ref{sLu}) we can treat $p$ as a probability distribution on pairs of complementary strategies. The restriction of `balance' can be ignored, since if we choose $v$ and $\tilde v$ to have  the same probability for every $v$, then
$$\mathbf{E}_p[v_i] =\frac{1}{2}~~~~~~ \forall i.$$

Recall that equation (\ref{sum}) holds:
\begin{equation}
\sum_{k=1}^n \partial_k u =1 \nonumber.
\end{equation}
 For any fixed $v \in \{0,1\}^n$ we write this as $$\sum_{i: v_i=0}\partial_{i} u = 1  - \sum_{i: v_i=1}\partial_{i} u $$ and differentiate again to get
\begin{eqnarray*}
 (\sum_{j: v_j=0}\partial_{j})(\sum_{i: v_i=0}\partial_{i} u )=- \sum_{j: v_j=0}\partial_{j}( \sum_{i: v_i=1}\partial_{i} u) =  \sum_{i: v_i=1}\partial_{i}(- \sum_{j: v_j=0}\partial_{j} u) =\sum_{i: v_i=1}\partial_{i}(\sum_{j: v_j=1}\partial_{j} u).
\end{eqnarray*}
Thus we obtain the equality
\begin{equation}\label{Sq}
(\sum_{i: v_i=0}\partial_{i})^2 u=(\sum_{i: v_i=1}\partial_{i})^2 u ,
\end{equation}
which we will use in 
the following calculation of the sum on the right hand side of (\ref{sLu}). For any fixed $v \in \{0,1\}^n$, let $$S=\sum_{i, j, k} (\alpha_k\partial^2_{ij}u\mathbbm{1}_{v_i =v_j \neq v_k}).$$ Then
\begin{eqnarray*}
S &=&
( \sum_{k } \alpha_k)(\sum_{i: v_i=1}\partial_{i})^2 u = (\sum_{i: v_i=1}\partial_{i})^2 u
\end{eqnarray*}
(by rearrangement of derivatives, 
combining terms, and observing that the sum of $\alpha_k$ equals $1$.) Returning now to \eqref{sLu}, we have
\begin{eqnarray}
 \mathcal{L}(u) &=&- \frac{1}{4}\max_{balance}\sum_{v\in\{0,1\}^n} (p(v)+p(\tilde v))(\sum_{i: v_i=1}\partial_{i})^2 u\\
\nonumber
&=&- \frac{1}{2}\max_{v\in\{0,1\}^n} \langle D^2u\cdot v, v\rangle.
\end{eqnarray}
For the second line above we used that the probabilities $p$ sum up to 1, so the maximum linear combination, weighted by those probabilities, is achieved by assigning all the weight on the largest term.

In conclusion, the elliptic PDE \eqref{elPDE} is
\begin{equation*} 
u(x) = \varphi(x) +  \frac{1}{2}\max_{v\in\{0,1\}^n} \langle D^2u\cdot v, v\rangle, 
\end{equation*}
and the parabolic PDE \eqref{parPDE} is
\begin{equation*} 
w_t(t,x) + \frac{1}{2}\max_{v\in\{0,1\}^n} \langle D^2w(t,x)\cdot v, v\rangle,
 = 0. 
\end{equation*}
as announced earlier in (\ref{Equa}) and (\ref{parE}).

The justification of our heuristic calculation, to be presented in Section \ref{CV}, relies on the fact that our operator $ \mathcal{L}$ is degenerate elliptic. We check this now. Recall that, by definition, an operator $\mathcal{L}(u, D^2u)$ is degenerate elliptic if
$$\mathcal{L}(u, M_1) \leq \mathcal{L}(u, M_2)$$
when $M_1-M_2$ is non-negative, that is $M_1-M_2 \geq 0$ as matrices.

\begin{lemma}\label{dege}
The operator
$$\mathcal{L}(u)=-\frac{1}{2}\max_{v\in\{0,1\}^n}\langle D^2u\cdot v, v\rangle$$
is degenerate elliptic.
\end{lemma}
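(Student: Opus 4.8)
The plan is to unwind the definition and reduce the claim to a pointwise (i.e.\ per-vertex) comparison of quadratic forms. Note first that $\mathcal{L}$ depends on its arguments only through the Hessian $M = D^2u$, and that it is convenient to track signs carefully: degenerate ellipticity requires $\mathcal{L}(u,M_1)\le \mathcal{L}(u,M_2)$ whenever $M_1-M_2\ge 0$, and here $\mathcal{L}(u,M) = -\tfrac12\max_{v\in\{0,1\}^n}\langle Mv,v\rangle$, so the inequality we must establish is equivalent to
\begin{equation*}
\max_{v\in\{0,1\}^n}\langle M_1 v, v\rangle \;\ge\; \max_{v\in\{0,1\}^n}\langle M_2 v, v\rangle.
\end{equation*}

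First I would fix an arbitrary $v\in\{0,1\}^n$ and use the hypothesis $M_1-M_2\ge 0$ to get the elementary inequality $\langle (M_1-M_2)v,v\rangle\ge 0$, i.e.\ $\langle M_1 v,v\rangle \ge \langle M_2 v,v\rangle$. Then, since this holds for every $v$ in the (finite) index set $\{0,1\}^n$, I would take the maximum over $v$ on both sides: the left-hand side is at least $\langle M_1 v^\ast, v^\ast\rangle$ for the maximizer $v^\ast$ of the right-hand side, and $\langle M_1 v^\ast, v^\ast\rangle \ge \langle M_2 v^\ast, v^\ast\rangle = \max_v \langle M_2 v,v\rangle$. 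This gives the displayed inequality, hence $\mathcal{L}(u,M_1)\le\mathcal{L}(u,M_2)$, which is exactly degenerate ellipticity.

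There is essentially no obstacle here: the only thing to be careful about is the sign bookkeeping — because of the leading minus sign, "larger Hessian'' makes $\mathcal{L}$ \emph{smaller}, which is the right direction — and the harmless observation that taking a maximum over a fixed finite set preserves the pointwise ordering of the quantities being maximized. (The same one-line argument would, of course, work with $\max$ replaced by any sup over a fixed set, so the structure $\{0,1\}^n$ plays no role beyond guaranteeing the max is attained.) I would present the argument in three short lines as above, with no auxiliary lemmas needed.
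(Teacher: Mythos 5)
Your argument is correct and is essentially the same as the paper's: fix $v$, use $M_1-M_2\ge 0$ to get $\langle M_1 v,v\rangle\ge\langle M_2 v,v\rangle$, take the maximum over $v\in\{0,1\}^n$, and multiply by $-1/2$ to flip the inequality. No gaps; your extra care with the sign bookkeeping and the max-preserves-ordering step just makes explicit what the paper leaves implicit.
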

\begin{proof}
Let $M_1-M_2 \geq 0$. Then, for any $v$ we have $\langle M_1\cdot v, v\rangle \geq \langle M_2\cdot v, v\rangle$. We take the maximum over the set of vectors $v$ such that ${v\in\{0,1\}^n}$: first on the left side, then on the right side, obtaining
$$\max_{v\in\{0,1\}^n}\langle M_1\cdot v, v\rangle \geq \max_{v\in\{0,1\}^n}\langle M_2\cdot v, v\rangle.$$ Finally, we multiply by $-1/2$ to obtain the desired inequality $$\mathcal{L}(u, M_1) \leq \mathcal{L}(u, M_2).$$
\end{proof}

\subsection{Optimal strategies} \label{OptStr}

A remaining question is what the PDEs tell us about the optimal strategies for the player and the market. The answer lies (formally, at least) in the preceding calculation. Consider the elliptic PDE and suppose its solution is known and  $C^2$. Suppose the vector of regrets so far is $x$. Then the best move of the player is to follow expert $i$ with probability
$$\alpha_i = \partial _i u(x) .$$
In turn, the market looks for a $v \in \{0,1 \}^n$ (and its complement $\vec1-v$) that saturates the maximum in
\begin{equation}
\mathcal{L}(u) = -\frac{1}{2}\max_{v\in\{0,1\}^n}\langle D^2u\cdot v, v\rangle.
\end{equation}
Observe that by (\ref{sLu}), $v$ saturates the maximum precisely when $1-v$ saturates the maximum.
Having found $v$, the market's optimal strategy is this:  with probability  $1/2$ advance the experts such that $v_i=1$, and  with probability $1/2$ advance the rest of the experts, i.e. those for which $v_i=0$.
If $\max_{v\in\{0,1\}^n}\langle D^2u\cdot v, v\rangle$  is achieved for more than one pair of vectors  $v$ and its complement $\vec{1}-v$, then the market's strategy is not unique.

\subsection{Comparison with paper \cite{GPS} by Gravin, Peres, Sivan}\label{GPScomp}

Our work is closely related to paper \cite{GPS} by Gravin, Peres, and Sivan. Briefly: this paper and \cite{GPS} look at the same problem through different lenses. The fundamental difference is that we study a natural continuum limit, while they focus on the problem in its original discrete-time form. This leads to differences with respect to \cite{GPS} in both the character of our results and the methods used to demonstrate them. Our rigorous results are mainly concerned with the value function, which we characterize as the unique viscosity solution of an appropriate PDE problem; in deriving these results, we also obtain heuristic guidance about how the optimal strategies are related to the solution of the PDE. In \cite{GPS}, by contrast, no PDE is studied; instead, the value of the game is studied using methods from random walks, combined with what an optimal control theorist would call ``verification arguments.''  Of course \cite{GPS} also studies the form of the optimal strategies, and its conclusions are similar to ours. However our continuum viewpoint offers a different perspective, in which the main features of the optimal strategies are understood by considering a linear programming problem.

Another distinction from \cite{GPS} is the choice of how to measure ``regret.'' Our methods permit treatment of the continuum problem with a relatively broad class of measures $\varphi$ of regret: if $x_j$ is the player's regret with respect to the $j$th expert, we require mainly that $\varphi(x_1,\ldots,x_N)$ be increasing in each variable, satisfy $\varphi(x_1+c,\ldots, x_N+c) = \varphi(x_1,\ldots,x_N) + c$,  and have linear growth at infinity. The paper \cite{GPS}, by contrast, focuses exclusively on the classic measure $\varphi(x) = \max_{j=1}^N x_j$ (i.e. the player's shortfall compared to the best-performing expert).

There are, of course, many similarities and parallels between our work and \cite{GPS}. In fact, our work began when we read \cite{GPS} and realized that a continuum perspective might be of interest. A particular parallel is worth noting: our exact solution of the geometric stopping problem with 3 players and objective $\varphi(x) = \max_{j=1}^3 x_j$ is the continuum analogue of a result proved in the discrete setting in \cite{GPS}. (We found it by looking at the optimal strategies identified in  \cite{GPS} and considering their continuum analogues.)


\section{The Games as  Numerical Schemes for the PDEs\label{chap:three}}\label{GS}

This section discusses the discrete solutions $u_\varepsilon$ and $w_\varepsilon$. Concerning the former: even the existence of $u_\varepsilon$ is not immediately obvious. We prove it (and obtain an estimate that is uniform in $\varepsilon$) by representing the time-independent dynamic programming principle as a "numerical scheme for the PDE \eqref{Equa}" similar to those discussed by eg Oberman's paper \cite{Oberman}.

In this section we represent the time-independent discrete problem as a numerical scheme  $\mathcal{F}_\varepsilon$ for the elliptic PDE (\ref{Equa}). Throughout this section we follow the setup of Oberman's paper \cite{Oberman}  in discussing the scheme and showing that the DPP has a unique solution. In particular, all the definitions in this section are from  \cite{Oberman}, as well as adapted theorem statements and proofs.
Our treatment differs from \cite{Oberman} in that we work with a scheme which is continuous, not discrete, in space.

This section also discusses the solution $w_\varepsilon$ of the  finite horizon problem. There the existence and uniqueness of $w_\varepsilon$ are easily established, but we need to prove uniform estimates as $\varepsilon \to 0$.
\subsection{Definitions of $\mathcal{F}_{\varepsilon}$, $S_\rho$, and Basic Properties}   \label{DFS}

In writing the DPP, one considers a point $x$ and all its `neighbors', which are of the form $x +\varepsilon(v-v_k\vec{1})$;  we write $N(x)$ for the collection of all such neighbors as $v$ ranges over $\{ 0,1\}^n$. We order the neighbors in some order, say increasing if $(v,v_k)$ were written in binary as a $n+1$-letter word, to obtain neighbors $x_{v,v_k}=x +\varepsilon(v-v_k\vec{1})$, where $(v,v_k)=0,1, 2, ... ,  2^{n+1}-1$; altogether there are $N=2^{n+1}$ neighbors, where $n$ is the number of experts. From now on, we write $u(x_j)=u_j$. In particular, we use the convention that $u_0= u(x)$.

We consider the solution to the geometric stopping problem, which we rearrange by subtracting $(1-\varepsilon^2)u(x)$, combining all terms on one side, and dividing by $\varepsilon^2$:
\begin{eqnarray*}
u(x) &=& \varepsilon^2 \varphi(x) +(1-\varepsilon^2)\min_{player} \max_{market} \mathbf{E}[u(x+\varepsilon \Delta x)] ,
\end{eqnarray*}
so
\begin{eqnarray*}
0&=& u(x) - \varphi(x) -\frac{1-\varepsilon^2}{\varepsilon^2}\min_{player}\max_{market}\mathbf{E}[u(x+\varepsilon\Delta x)-u(x)].
\end{eqnarray*}

Inspired by this rearrangement of the geometric DPP, we define the time-independent approximation scheme as $ \mathcal{F}_\varepsilon[u]=0$, where
{\small
\begin{eqnarray}
 \mathcal{F}_\varepsilon[u](x)&:=& u(x) - \varphi(x) -\frac{1-\varepsilon^2}{\varepsilon^2}\min_{player}\max_{market}\mathbf{E}[u(x+\varepsilon\Delta x)-u(x)] \\ \nonumber
 &=& u(x) - \varphi(x) -\frac{1-\varepsilon^2}{\varepsilon^2}\min_{player}\max_{market}\sum p(v)\alpha_k[u_j -u_0]\\
&=&  u - \varphi +\frac{1-\varepsilon^2}{\varepsilon^2}\max_{player}\min_{market}\sum p(v)\alpha_k[u_0 -u_j]\\ \nonumber
&=& F_\varepsilon^x(u, u_0-u_j).
\end{eqnarray}
}

Evidently, for any fixed $x$ the value of
$$\mathcal{F}_\varepsilon[u](x):=F_\varepsilon^x(u_0, u_0-u_j),$$
depends only on the values $u$ at $x$, and its neighbors  $x + \varepsilon\Delta x.$
In $F_\varepsilon^x(\cdot, \cdot)$ the first argument refers to the function $u(x)$ before $\varphi$, and the subsequent arguments $u_0 - u_j$, $j=0,1, ... , N-1$ refer to the finite differences $u_0-u_j$ in the expected value terms.

We will prove that the scheme has a number of properties, whose analogues can be found in \cite{Oberman}:
\begin{definition}
The scheme $\mathcal{F}_\varepsilon$ is proper if there exists $\delta > 0$ such that for all $x, y \in \RR^{N}$ and $x_0, y_0 \in \RR$,
$$x_0 \leq y_0~ \mbox{ implies }~ F_\varepsilon^x(x_0, y)-  F_\varepsilon^x(y_0, y) \leq \delta(x_0-y_0). $$
\end{definition}

\begin{definition}
The scheme $\mathcal{F}_\varepsilon$ is degenerate elliptic if  the map
 $$F_\varepsilon^x(u_0, u_0-u_j)$$ is non-decreasing in each variable $u_0, u_0-u_j$ for all $j=0,1, 2, ... , 2^{n+1}-1$.
\end{definition}

\begin{definition}
The finite difference scheme $F_{\varepsilon}$ is Lipschitz continuous if there exists a constant $K$ such that for all $z,y \in \RR^{N+1}$,
\begin{equation}
 |F_\varepsilon(z)-F_\varepsilon(y)| \leq K ||z-y||_\infty.
\end{equation}
\end{definition}

\begin{lemma}
The scheme $\mathcal{F}_\varepsilon$ is proper and  degenerate elliptic.
\end{lemma}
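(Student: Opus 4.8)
The plan is to read both properties off the explicit formula for the scheme,
\[
  F_\varepsilon^x(u_0,\, u_0-u_j) \;=\; u_0 - \varphi(x) + \frac{1-\varepsilon^2}{\varepsilon^2}\,\max_{player}\min_{market}\sum p(v)\,\alpha_k\,[\,u_0-u_j\,],
\]
bearing in mind that in this representation the first argument is the bare value $u_0=u(x)$ while the remaining arguments are the finite differences $u_0-u_j$, and that in the definitions of ``proper'' and ``degenerate elliptic'' one perturbs a single one of these arguments with the others frozen. The only ingredients needed are: $0<\varepsilon<1$, so that $(1-\varepsilon^2)/\varepsilon^2>0$; the entries $\alpha_k$ and $p(v)$ of the player's and market's strategies are nonnegative; and the pointwise $\max$ or $\min$ of a family of functions each non-decreasing in a given variable is again non-decreasing in that variable. (For fixed $x$ and fixed finite differences the inner $\max\min$ is a linear program over the simplices of $\vec\alpha$ and of $p$, so it is attained and equals the corresponding $\min\max$.)

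For properness, fix $x$ and the finite-difference vector $y=(y_j)_j$. The term $\frac{1-\varepsilon^2}{\varepsilon^2}\max_{player}\min_{market}\sum p(v)\alpha_k y_j$ does not involve the first argument, so $u_0\mapsto F_\varepsilon^x(u_0,y)$ is affine with slope exactly $1$; hence $F_\varepsilon^x(x_0,y)-F_\varepsilon^x(y_0,y)=x_0-y_0$ for all $x_0,y_0\in\RR$. In particular, if $x_0\le y_0$ then $F_\varepsilon^x(x_0,y)-F_\varepsilon^x(y_0,y)=x_0-y_0\le\delta(x_0-y_0)$ with $\delta=1$ (indeed with equality), so $\mathcal F_\varepsilon$ is proper.

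For degenerate ellipticity I check monotonicity of $F_\varepsilon^x$ in each argument, all others held fixed. In the first argument the coefficient of $u_0$ is $+1>0$, so $F_\varepsilon^x$ is non-decreasing there. Now fix a neighbor index $j$ and increase $u_0-u_j$ with every other argument (including $u_0$) frozen; the $j=0$ slot is identically $0$, so take $j\ge1$. For each fixed pair of strategies $(\vec\alpha,p)$ the terms of $\sum p(v)\alpha_k[u_0-u_j]$ that carry this finite difference have a nonnegative total coefficient — a partial sum of the products $p(v)\alpha_k\ge0$ — so that linear form is non-decreasing in $u_0-u_j$; taking $\min$ over the market's strategies and then $\max$ over the player's preserves this monotonicity, as does multiplication by the positive constant $(1-\varepsilon^2)/\varepsilon^2$. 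Hence $F_\varepsilon^x$ is non-decreasing in every one of its variables, i.e.\ the scheme is degenerate elliptic.

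There is no real obstacle here; the only point requiring care is notational. In the representation $F_\varepsilon^x(u_0,u_0-u_j)$, ``increasing $u_0$'' means translating all of $u(x),u(x_1),\dots,u(x_{N-1})$ by the same constant, so that only the $u-\varphi$ term moves — not perturbing a single grid value — and ``increasing a finite difference $u_0-u_j$'' means lowering the single neighbor value $u_j$. Once this bookkeeping is kept straight, both statements reduce to the positivity of $(1-\varepsilon^2)/\varepsilon^2$ and of the weights $p(v)\alpha_k$, together with the stability of monotonicity under $\max$ and $\min$.
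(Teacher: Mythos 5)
Your proof is correct and follows essentially the same route as the paper's: properness is read off from the fact that the $\max\min$ term does not involve the first argument, so the difference is exactly $x_0-y_0$ (giving $\delta=1$), and degenerate ellipticity follows because the scheme is the sum of $u_0$, $-\varphi$, and a $\max\min$ of nonnegative linear combinations of the finite differences. Your version simply spells out the bookkeeping (which variable is being perturbed, and why $\max$ and $\min$ preserve monotonicity) in more detail than the paper does.
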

\begin{proof}
The scheme is proper as $F_\varepsilon(x_0, y)-  F_\varepsilon(y_0, y)=x_0 -y_0 $.\\
The operator $ \frac{1-\varepsilon^2}{\varepsilon^2}\max_{\alpha}\min_{p}\sum p(v)\alpha_k[u -u_j]$ is degenerate elliptic as a $\max\min$ of a positive linear combination of its $u$-differences. Therefore, the scheme $F_\varepsilon^x(u, u-u_j)$ is degenerate elliptic: it is a sum of the function $u $, the function $-\varphi$, and a degenerate elliptic operator.
\end{proof}

\begin{lemma}
The scheme $\mathcal{F}_\varepsilon$ is Lipschitz continuous with $K =1+  (1-\varepsilon^2)/\varepsilon^2.$
\end{lemma}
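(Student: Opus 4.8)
The plan is to split $F_\varepsilon^x$ into its affine part and the part coming from the $\min\max$, and to bound the Lipschitz constant of each. Recall that, for fixed $x$,
$$F_\varepsilon^x(u_0, u_0-u_j) = u_0 - \varphi(x) + \frac{1-\varepsilon^2}{\varepsilon^2}\,\max_{player}\min_{market}\sum_{v,k} p(v)\,\alpha_k\,(u_0 - u_{j(v,k)}),$$
where $j(v,k)$ denotes the index of the neighbor $x+\varepsilon(v - v_k\vec 1)$. Here $\varphi(x)$ is a constant once $x$ is fixed (it does not depend on the argument vector at all), and $u_0$ depends on a single coordinate with Lipschitz constant $1$; so the affine part $u_0 - \varphi(x)$ contributes at most $1\cdot\|z-y\|_\infty$.

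First I would observe that, for each fixed pair of strategies $(\vec\alpha, p)$, the inner sum $\sum_{v,k} p(v)\alpha_k(u_0 - u_{j(v,k)})$ is a convex combination of the finite-difference coordinates $u_0 - u_j$: the weights $p(v)\alpha_k$ are nonnegative and, since $\sum_v p(v)=1$ and $\sum_k \alpha_k = 1$, they sum to $1$. In particular this expression depends only on the $N$ difference-coordinates (not on $u_0$ itself), and it is $1$-Lipschitz in those coordinates with respect to $\|\cdot\|_\infty$. Then I would invoke the elementary fact that taking a supremum or infimum over an arbitrary index set preserves the Lipschitz constant: if each $g_a$ is $L$-Lipschitz then so are $\sup_a g_a$ and $\inf_a g_a$. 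Applying this once for $\min_{market}$ and once for $\max_{player}$ shows that $\max_{player}\min_{market}\sum p(v)\alpha_k(u_0-u_j)$ is $1$-Lipschitz in the difference-coordinates, so the nonlinear part of $F_\varepsilon^x$ is $\tfrac{1-\varepsilon^2}{\varepsilon^2}$-Lipschitz.

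Adding the two contributions and using the triangle inequality gives $|F_\varepsilon(z)-F_\varepsilon(y)| \le \left(1 + \tfrac{1-\varepsilon^2}{\varepsilon^2}\right)\|z-y\|_\infty$, which is the stated bound (and equals $\varepsilon^{-2}$). The only point requiring any care is the bookkeeping — confirming that the min-max expression genuinely depends only on the difference-coordinates and that the combination weights add to $1$ — and after that the argument is just the standard ``sup of $L$-Lipschitz functions is $L$-Lipschitz'' observation together with the triangle inequality, so I do not anticipate a real obstacle.
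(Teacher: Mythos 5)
Your proposal is correct and follows essentially the same route as the paper: decompose the scheme into the affine part $u_0-\varphi(x)$ (Lipschitz constant $1$) and the min-max part, observe that for fixed strategies the inner sum is a convex combination of the difference-coordinates (weights $p(v)\alpha_k$ nonnegative and summing to $1$, hence $1$-Lipschitz), and use that taking $\min$ and $\max$ over strategies preserves the Lipschitz constant. The paper writes the last step out as an explicit chain of inequalities rather than citing the general ``sup of $L$-Lipschitz functions is $L$-Lipschitz'' fact, but the content is identical.
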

\begin{proof}
Firstly, observe that the sum of two Lipschitz continuous schemes is Lipschitz continuous. Since $u-\varphi$ is  Lipschitz continuous with a constant 1, we only need to find a Lipschitz constant C for the $\max\min$ part of the scheme; then $K=1 +C(1-\varepsilon^2)/\varepsilon^2$.

Define $F_{p,\alpha}[\tilde{u}]=\sum_{j(v,v_k)} p(v)\alpha_k\tilde {u}_j$. Observe that $F_{p,\alpha}$ is a linear combination of its independent variables $\tilde{u}_j$ with weights that are non-negative and sum up to $1$, as the non-negative weights come from an expectation. Then, $F_{p,\alpha}$ is Lipschitz continuous with constant $1$. For any admissible vectors $u, w$, we get the following sequence of inequalities:
\begin{eqnarray*}
 F_{p,\alpha}[u] &\leq& F_{p,\alpha}[w] + |u-w|_\infty,  \\
\max_{\alpha} F_{p,\alpha}[u] &\leq&\max_\beta F_{p,\beta}[w] + |u-w|_\infty,\\
\min_p\max_{\alpha} F_{p,\alpha}[u] &\leq&\min_\rho\max_\beta F_{\rho,\beta}[w] + |u-w|_\infty.
\end{eqnarray*}
The same equality holds, of course, with $u$ and $w$ switched. Hence, $\min_p \max_\alpha F_{p,\alpha}[\tilde{u}]$ is Lipschitz continuous with constant $1$. This means that the overall Lipschitz constant is  $K =1+  (1-\varepsilon^2)/\varepsilon^2$.
\end{proof}

 We introduce some notation for the next lemma. Given $u, w \in \RR^M$, define $u \lor w = \max (u, w)$, $u^+ = \max(u,0)$, $u^- =\min(u,0)$. The following lemma is found in \cite{Oberman}.
\begin{lemma}
(ordered Lipschitz continuity property) Let $\mathcal{F}_\varepsilon$ be a Lipschitz continuous, degenerate elliptic scheme with Lipschitz constant $K$. Then for any $y,z \in \mathbb{R}^{N+1}$ we have
\begin{equation}
 -K||(z-y)^-||_\infty \leq F_\varepsilon (z) - F_\varepsilon (y) \leq K||(z-y)^+||_\infty .
\end{equation}
\end{lemma}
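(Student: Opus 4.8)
The plan is to combine the two structural properties of $\mathcal{F}_\varepsilon$ in the obvious order: monotonicity first, then Lipschitz continuity. The key elementary fact is the coordinatewise decomposition $z - y = (z-y)^+ + (z-y)^-$, in which $(z-y)^+ \geq 0$ and $(z-y)^- \leq 0$ componentwise. From this one reads off the two coordinatewise inequalities
$$
y + (z-y)^- \ \leq\ z \ \leq\ y + (z-y)^+ ,
$$
since adding the nonpositive part $(z-y)^-$ to $y+(z-y)^+$ recovers $z$, and adding the nonnegative part $(z-y)^+$ to $y+(z-y)^-$ also recovers $z$.

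For the upper bound, I would start from $z \leq y + (z-y)^+$ and apply degenerate ellipticity, i.e. the fact that $F_\varepsilon$ is non-decreasing in each of its arguments, to get
$$
F_\varepsilon(z) \ \leq\ F_\varepsilon\bigl(y + (z-y)^+\bigr).
$$
Then Lipschitz continuity with constant $K$ gives $F_\varepsilon\bigl(y + (z-y)^+\bigr) - F_\varepsilon(y) \leq K \,\|(z-y)^+\|_\infty$, and chaining the two displays yields $F_\varepsilon(z) - F_\varepsilon(y) \leq K\,\|(z-y)^+\|_\infty$.

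For the lower bound I would argue symmetrically: from $z \geq y + (z-y)^-$ and monotonicity, $F_\varepsilon(z) \geq F_\varepsilon\bigl(y + (z-y)^-\bigr)$; and from Lipschitz continuity, $F_\varepsilon(y) - F_\varepsilon\bigl(y + (z-y)^-\bigr) \leq K\,\|(z-y)^-\|_\infty$, so $F_\varepsilon\bigl(y + (z-y)^-\bigr) \geq F_\varepsilon(y) - K\,\|(z-y)^-\|_\infty$. Combining gives $F_\varepsilon(z) - F_\varepsilon(y) \geq -K\,\|(z-y)^-\|_\infty$, which is the left-hand inequality.

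There is really no serious obstacle here; the content is entirely in correctly setting up the coordinatewise comparisons $y+(z-y)^-\le z\le y+(z-y)^+$ and in invoking monotonicity of $F_\varepsilon$ in \emph{each} of its variables. The only point to be careful about is that ``degenerate ellipticity'' for the scheme is phrased as monotonicity of $F_\varepsilon^x$ in the arguments $u_0$ and $u_0-u_j$; since the lemma is stated for $F_\varepsilon$ viewed as a function of its full argument vector in $\RR^{N+1}$, one should make sure that ``non-decreasing in each variable'' is applied to exactly those arguments, which is precisely the hypothesis. Everything else is bookkeeping.
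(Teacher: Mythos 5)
Your argument is correct and is exactly the standard one (the decomposition $z-y=(z-y)^++(z-y)^-$, then monotonicity followed by Lipschitz continuity), which is the proof the paper defers to Oberman for without reproducing. Nothing is missing; your remark about applying monotonicity to the full argument vector in $\RR^{N+1}$ is the only point requiring care, and you handle it.
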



\subsection{The Euler Map}\label{EM}
We define the Euler map associated to our scheme $\mathcal{F}_\varepsilon[u]$.
\begin{definition}
 For $\rho >0$, define the explicit Euler map $S_\rho$ by
\begin{equation}
 S_\rho (u) = u - \rho \mathcal{F}_\varepsilon[u].
\end{equation}
\end{definition}

Intuitively: the scheme $\mathcal{F}_\varepsilon[u_\varepsilon]=0$ is a numerical approximation of an elliptic PDE, and the map $ u \mapsto S_\rho(u)$  is the time step map  for an explicit discretization of the associated parabolic equation.
The following theorem and its proof are found in \cite{Oberman}.
\begin{theorem}
Fix $\rho$ such that $\rho K <0.5.$ Then, the Euler map is monotone.
\end{theorem}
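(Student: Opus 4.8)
The plan is to prove the stronger, pointwise statement: for each fixed $x$, the number $S_\rho(u)(x) = u_0 - \rho F_\varepsilon^x(u_0,\, u_0-u_j)$ is a non-decreasing function of the finitely many values $u_0 = u(x)$ and $\{u(y): y\in N(x)\}$ on which it depends. Monotonicity of $S_\rho$ in the Oberman sense — $u\le w$ pointwise implies $S_\rho(u)\le S_\rho(w)$ pointwise — then follows at once, since passing from $u$ to $w$ only increases each of these arguments. First I would record the two facts already in hand: $F_\varepsilon^x$ is non-decreasing in $u_0$ and in each difference $u_0-u_j$ (degenerate ellipticity of the scheme), and $F_\varepsilon^x$ is Lipschitz with constant $K = 1 + (1-\varepsilon^2)/\varepsilon^2$ with respect to the $\ell^\infty$-norm of its argument vector.

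The easy direction: fix a neighbor index $j$ and increase $u_j$ while holding $u_0$ and the other neighbor values fixed. This decreases the single argument $u_0-u_j$ of $F_\varepsilon^x$ and leaves its other arguments unchanged, so by degenerate ellipticity $F_\varepsilon^x$ does not increase; since $\rho>0$, the quantity $S_\rho(u)(x) = u_0 - \rho F_\varepsilon^x(\cdots)$ does not decrease. The direction that uses the hypothesis on $\rho$: increase $u_0$ by $h>0$ while holding all neighbor values fixed. This increases the first argument of $F_\varepsilon^x$ by $h$ and increases every difference argument $u_0-u_j$ by $h$, so the argument vector of $F_\varepsilon^x$ moves by exactly $h$ in $\ell^\infty$-norm. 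Lipschitz continuity then bounds the induced change of $F_\varepsilon^x$ by $Kh$ above, while degenerate ellipticity forces that change to be $\ge 0$. Hence $S_\rho(u)(x)$ changes by $h - \rho\,(\text{change in }F_\varepsilon^x) \ge h - \rho K h = (1-\rho K)h \ge 0$, the last inequality using $\rho K < 1/2 < 1$. Combining the two directions shows $S_\rho(u)(x)$ is non-decreasing in all of its arguments, which is the claim.

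The computation is routine once the degenerate-ellipticity and Lipschitz lemmas for $\mathcal{F}_\varepsilon$ are available; the one point I would be careful about — and the only place the quantitative condition on $\rho$ enters — is the $u_0$-perturbation. A change in the single number $u(x)$ perturbs several arguments of $F_\varepsilon^x$ simultaneously (the "value" slot and all the finite-difference slots), so one must invoke the $\ell^\infty$-Lipschitz bound for $F_\varepsilon^x$ as a whole rather than controlling each slot's effect separately; this is precisely what keeps the net coefficient $1-\rho K$ of $h$ non-negative and explains why the threshold is stated in terms of $\rho K$. It is worth noting that $\rho K < 1/2$ is in fact more than monotonicity alone needs (any $\rho K < 1$ would suffice), the sharper bound being held in reserve for the contraction and convergence arguments that follow.
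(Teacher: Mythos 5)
Your proof is correct and rests on exactly the same two ingredients as the paper's: degenerate ellipticity of $F_\varepsilon^x$ and its $\ell^\infty$-Lipschitz bound $K$, combined to produce the factor $1-\rho K\ge 0$. The only difference is organizational — the paper applies the packaged ``ordered Lipschitz continuity'' inequality once to the full difference $S_\rho(u)-S_\rho(w)$, whereas you perturb the stencil values one slot at a time — and your side remark that monotonicity alone needs only $\rho K<1$ is also consistent with the paper's computation.
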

\begin{proof}
Suppose $u \leq w$. Then,
\begin{eqnarray*}
S_\rho (u)-S_\rho (w)&=& u_0 - w_0 +\rho(F_\varepsilon(w_0, w_0-w_j)- F_\varepsilon(u_0, u_0-u_j)), \\
&\leq& u_0 - w_0 +\rho K ||(w_0-u_0, w_0-w_j -u_0+u_j)^+ ||_\infty,\\
&\leq& (u_0 - w_0) (1-\rho K) \leq 0.
\end{eqnarray*}

The first inequality follows from the ordered Lipschitz continuity property. The second inequality follows from  $u \leq w$, and the last one from the assumption of the theorem. This establishes monotonicity.
\end{proof}

\subsection{Properties of $\tilde{\varphi}$}\label{smooth}
We work with $\varphi$ - a measure of regret and a Lipschitz continuous function which also satisfies properties (1.2 -1.6). One example of such a function is the classical
$$\varphi(x)=\max_k \{x_k\},$$ which has discontinuous first derivatives, so we don't want to assume that $\varphi$ is smooth. We will need a smoothed version of $\varphi$. We define it using
 a mollifier $\eta$, defined as:
\[
\eta(x)  = \begin{cases}c_\eta e^{-\frac{1}{1-|x|^2}} ~~~~~~~~~~\text{ if}~~ |x|<1 \\ 0~~~~~~~~~~~~~~~~~~~~~\text{ if} ~~|x|\geq 1,  \end{cases}
\]
where the constant $c_\eta$ is chosen so that $\eta$ integrates to $1.$ Our smoothed version of $\varphi$ is
\begin{equation}
\tilde{\varphi} = \varphi*\eta.
\end{equation}
The following specific properties of $\tilde{\varphi}$ are easily verified:
\begin{align}
\tilde{\varphi}~ & \text{is}~ C^\infty, \text{with uniformly bounded derivatives of order} ~k~ \text{for any}~ k \geq 1, \label{Prop1} \\
\tilde{\varphi}~ & \text{is uniformly 'close to'}~\varphi ~ \text{in the sense that } ~ |\varphi-\tilde{\varphi}|_\infty \leq K~\nonumber \\
 &\text{for} ~K=\text{ the constant of the global Lipschitz bound}, \label{Prop2}\\
\tilde{\varphi}~ & \text{is monotone in each variable},\label{Prop3}\\
\tilde{\varphi}~ & \text{has the translation property (\ref{tranp}).}\label{Prop4}
\end{align}

%
%

Now, we estimate the expectation term, when $\tilde{\varphi}$ replaces $u$. In order to do so, we use its Taylor expansion:
\begin{eqnarray*}
E&:=&| \min_{player}\max_{market}\mathbf{E}[\tilde{\varphi}(x+\varepsilon\Delta x)-\tilde{\varphi}(x)]  |\\
&\leq& \Big{|}\min_{player}\max_{market}\Big{[}\varepsilon\sum_{k=1}^n\langle\nabla \tilde{\varphi}_k,\mathbf{E}[ \Delta x_k ]\rangle+\frac{\varepsilon^2}{2}\mathbf{E}[\langle D^2\tilde{\varphi} \cdot \Delta x, \Delta x\rangle]+O(\varepsilon^3)\Big{]}\Big{|}.
\end{eqnarray*}
Let us focus on the $\varepsilon$-order factor. Because of Lemma \ref{balance} it is sufficient to consider balanced strategies for the market. For such strategies we have
$$ \mathbf{E}[\Delta x_k]=\sum_{i=1}^n \alpha_i \mathbf{E}_p[v_k-v_i] =0.$$
So the $\varepsilon$ order term is  $0$. Then, we can bound the term with $\varepsilon^2$ (using the uniform bound on $\nabla^2\tilde{\varphi}$), obtaining
\begin{equation}
E \leq| \min_{player}\max_{market}\mathbf{E}[\tilde{\varphi}(x+\varepsilon\Delta x)-\tilde{\varphi}(x)]  | \leq C\varepsilon^2.\label{Emin} \end{equation}
We use this result in the following lemma.
\begin{lemma}
The function $\tilde{\varphi}$ is an almost-solution to the scheme, i.e. $|\mathcal{F}_\varepsilon[\tilde{\varphi}]|\leq K_1$ for some constant $K_1$, independent of the small parameter $\varepsilon$.
\end{lemma}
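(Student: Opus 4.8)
The plan is to estimate $\mathcal{F}_\varepsilon[\tilde\varphi](x)$ directly from its definition
$$\mathcal{F}_\varepsilon[\tilde\varphi](x) = \tilde\varphi(x) - \varphi(x) - \frac{1-\varepsilon^2}{\varepsilon^2}\min_{player}\max_{market}\mathbf{E}[\tilde\varphi(x+\varepsilon\Delta x)-\tilde\varphi(x)],$$
and bound each of the three pieces separately. The first two combine into $\tilde\varphi(x)-\varphi(x)$, which by property \eqref{Prop2} is bounded in absolute value by the global Lipschitz constant $K$, uniformly in $x$ and independent of $\varepsilon$. The third piece is exactly the quantity $E$ appearing in \eqref{Emin}: the factor $(1-\varepsilon^2)/\varepsilon^2$ multiplies a $\min\max$ expectation which, because $\tilde\varphi$ is smooth with uniformly bounded second derivatives (property \eqref{Prop1}) and because by Lemma \ref{balance} the market may be assumed to play a balanced strategy (so the $\varepsilon$-order term vanishes), is $O(\varepsilon^2)$ with a constant depending only on the uniform bound on $\nabla^2\tilde\varphi$. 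Thus $\frac{1-\varepsilon^2}{\varepsilon^2}\cdot O(\varepsilon^2) = O(1)$ uniformly in $\varepsilon$.

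Concretely, I would first invoke Lemma \ref{balance}: since $\tilde\varphi$ is monotone nondecreasing in each variable (property \eqref{Prop3}) and has the translation property \eqref{Prop4}, the market has an optimal balanced strategy in the inner $\max$, so it suffices to estimate the expectation over balanced strategies. For any such strategy the first-order term $\varepsilon\sum_k\langle\nabla\tilde\varphi,\mathbf{E}[\Delta x_k]\rangle$ vanishes because $\mathbf{E}[\Delta x_k] = \sum_i\alpha_i\mathbf{E}_p[v_k-v_i]=0$ under balance, exactly as in the derivation of \eqref{Emin}. The remaining terms are controlled by the uniform bound on $D^2\tilde\varphi$: $|\mathbf{E}[\langle D^2\tilde\varphi\cdot\Delta x,\Delta x\rangle]|\le C$ since $\Delta x$ is bounded (each $\Delta x_k\in\{-1,0,1\}$), and the $O(\varepsilon^3)$ remainder is likewise controlled by the uniform bound on $D^3\tilde\varphi$. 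This yields $E\le C\varepsilon^2$ as in \eqref{Emin}, and therefore
$$\Big|\frac{1-\varepsilon^2}{\varepsilon^2}\min_{player}\max_{market}\mathbf{E}[\tilde\varphi(x+\varepsilon\Delta x)-\tilde\varphi(x)]\Big| \le \frac{1-\varepsilon^2}{\varepsilon^2}\cdot C\varepsilon^2 \le C.$$

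Combining, $|\mathcal{F}_\varepsilon[\tilde\varphi](x)|\le K + C =: K_1$, independent of $\varepsilon$, which is the claim. Since all estimates are uniform in $x$, the bound holds pointwise in $x$ with a single constant. There is essentially no main obstacle here beyond bookkeeping: the only point requiring a little care is ensuring that the constant $C$ in the $O(\varepsilon^2)$ bound for $E$ truly does not depend on $\varepsilon$, which is where the uniform-in-$k$ boundedness of the derivatives of $\tilde\varphi$ (property \eqref{Prop1}) is essential — this is precisely what the mollification buys us over working with $\varphi$ directly, since $\varphi$ itself need not be $C^2$. Everything else is a direct substitution into the already-established estimate \eqref{Emin}.
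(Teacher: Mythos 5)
Your proposal is correct and follows essentially the same route as the paper: bound $|\tilde\varphi-\varphi|$ by the Lipschitz constant $K$ via \eqref{Prop2}, and control the $\min\max$ term by the estimate $E\le C\varepsilon^2$ of \eqref{Emin}, which the paper derives exactly as you do (Taylor expansion, Lemma \ref{balance} to kill the first-order term, and the uniform bound on $D^2\tilde\varphi$). No substantive differences.
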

\begin{proof}
Let us bound the absolute value of the scheme at $\tilde{\varphi}$. We use the preceding estimate for $E$
\begin{eqnarray*}
|\mathcal{F}_\varepsilon[\tilde{\varphi}]| &=&|\tilde{\varphi}-\varphi + \frac{1-\varepsilon^2}{\varepsilon^2}\min_{player}\max_{market}\mathbf{E}[\tilde{\varphi}(x+\varepsilon\Delta x)-\tilde{\varphi}(x)]  | \\
&\leq& |\tilde{\varphi}-\varphi| +\frac{1-\varepsilon^2}{\varepsilon^2}|\min_{player}\max_{market}\mathbf{E}[\tilde{\varphi}(x+\varepsilon\Delta x)-\tilde{\varphi}(x)]  | \\
&\leq& K+ \frac{1-\varepsilon^2}{\varepsilon^2}C\varepsilon^2 \\
&\leq& K+C(1-\varepsilon^2).
\end{eqnarray*}
This has the form we want:
 $$|\mathcal{F}_\varepsilon[\tilde{\varphi}] |\leq K_1.$$
\end{proof}

\subsection{Existence and Uniqueness of a Solution of $\mathcal{F}_\varepsilon$ }\label{uwp}

\begin{theorem} \label{con}
Fix $\rho$ so that $\rho K <0.5.$ Then, for some $M>0$ (independent of $\varepsilon$) the Euler map is a strict contraction in the sup norm on a ball of size $M$, centered at $\tilde{\varphi}$.
\end{theorem}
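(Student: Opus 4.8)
The plan is to assemble the result from the structural facts already in hand, with no new hard estimate. The two ingredients are: (i) monotonicity of the Euler map $S_\rho$, which holds under the standing hypothesis $\rho K < 0.5$; and (ii) the way $\mathcal{F}_\varepsilon$ commutes with the addition of constants, which is essentially the statement that the properness inequality for our scheme is in fact an equality with $\delta = 1$. Combining (i) and (ii) yields a global contraction estimate for $S_\rho$ in the sup norm, and the ``almost-solution'' bound $|\mathcal{F}_\varepsilon[\tilde\varphi]| \le K_1$ then pins down a radius $M$, independent of $\varepsilon$, on which $S_\rho$ maps the ball $B_M(\tilde\varphi)$ into itself.

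First I would record the constant-shift identity: for every $c \in \RR$, $\mathcal{F}_\varepsilon[u+c] = \mathcal{F}_\varepsilon[u] + c$. Indeed, in the defining formula the term $u - \varphi$ picks up $+c$, while in the finite differences $u(x+\varepsilon\Delta x) - u(x)$ inside the $\min_{player}\max_{market}\mathbf{E}$ the added constant cancels, so the expectation term is unchanged. From this it follows immediately that
$$ S_\rho(u+c) = (u+c) - \rho\,\mathcal{F}_\varepsilon[u+c] = S_\rho(u) + (1-\rho)c. $$

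Next I would deduce the contraction estimate for arbitrary bounded $u, w$. Set $c = \|u-w\|_\infty$, so $w - c \le u \le w + c$ pointwise. Applying monotonicity of $S_\rho$ and then the displayed identity gives
$$ S_\rho(w) - (1-\rho)c = S_\rho(w-c) \le S_\rho(u) \le S_\rho(w+c) = S_\rho(w) + (1-\rho)c, $$
hence $\|S_\rho(u) - S_\rho(w)\|_\infty \le (1-\rho)\|u-w\|_\infty$. Since $\rho > 0$ (and $\rho K < 0.5$ with $K > 1$ forces $\rho < 1$), the factor $1-\rho$ is strictly less than $1$, so $S_\rho$ is a strict contraction on all of $L^\infty$, with $\varepsilon$-independent contraction constant.

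Finally I would verify that $B_M(\tilde\varphi)$ is preserved. By definition $S_\rho(\tilde\varphi) - \tilde\varphi = -\rho\,\mathcal{F}_\varepsilon[\tilde\varphi]$, so $\|S_\rho(\tilde\varphi) - \tilde\varphi\|_\infty \le \rho K_1$ by the almost-solution lemma. For $\|u - \tilde\varphi\|_\infty \le M$ the triangle inequality and the contraction estimate give
$$ \|S_\rho(u) - \tilde\varphi\|_\infty \le \|S_\rho(u) - S_\rho(\tilde\varphi)\|_\infty + \|S_\rho(\tilde\varphi) - \tilde\varphi\|_\infty \le (1-\rho)M + \rho K_1, $$
which is $\le M$ as soon as $M \ge K_1$. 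Taking $M = K_1$ — independent of $\varepsilon$, since $K_1$ is — we conclude that $S_\rho$ maps $B_M(\tilde\varphi)$ into itself and is a strict contraction there. There is no genuinely hard step; the one point needing care is the bookkeeping in this last paragraph, where the $\varepsilon$-independence of $M$ rests entirely on the $\varepsilon$-independence of $K_1$ (which in turn came from the vanishing of the $\varepsilon$-order term via balanced strategies). It is worth noting that $\rho$ itself must be taken of order $\varepsilon^2$ since $K \approx \varepsilon^{-2}$, but this does not affect the argument.
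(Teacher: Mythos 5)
Your proof is correct and follows essentially the route the paper intends: the paper defers to Oberman's Theorem 7, whose argument is exactly your combination of monotonicity of $S_\rho$ (valid for $\rho K<0.5$) with the exact properness/constant-shift identity $\mathcal{F}_\varepsilon[u+c]=\mathcal{F}_\varepsilon[u]+c$ to get the contraction factor $1-\rho$, plus the almost-solution bound $|\mathcal{F}_\varepsilon[\tilde\varphi]|\le K_1$ to make the ball of radius $M=K_1$ invariant. The bookkeeping, including the $\varepsilon$-independence of $M$ and the observation that $\rho\sim\varepsilon^2$ is harmless, is all in order.
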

The proof of Theorem \ref{con} is parallel to the proof of Theorem 7 from \cite{Oberman}.
\medskip

We now present the main result of this subsection:
\begin{theorem}\label{Euler}
The scheme $\mathcal{F}_\varepsilon$ has a unique solution $u_\varepsilon$ in the class of functions $u$ such that $u-\tilde{\varphi}$ is uniformly bounded on $\RR^n$. Moreover the solution  $u_\varepsilon$ has the following properties:
\begin{enumerate}
\item There is a constant $M$ such that $|u_\varepsilon - \tilde{\varphi}|\leq M ~~~~\forall  x \in\RR^n$.
\item The function $u_\varepsilon$  is monotone nondecreasing in each variable $x_j$.
\item The function $u_\varepsilon$ has the translation property, i.e.
$$u_\varepsilon(x_1+c,x_2+c,...,x_n+c) = u_\varepsilon(x_1,x_2,...,x_n)+c.$$
\end{enumerate}
\end{theorem}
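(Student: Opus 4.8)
The plan is to realize $u_\varepsilon$ as the unique fixed point of the Euler map $S_\rho$ and to extract all three listed properties directly from the structure of $S_\rho$. The linchpin is to rewrite $S_\rho$ in a form in which the dependence on $u$ is manifestly order preserving: pulling the term $-u(x)$ out of the inner $\min\max$ in $\mathcal{F}_\varepsilon[u]$ and using $K=1+(1-\varepsilon^2)/\varepsilon^2$, one gets
\[
S_\rho(u)(x)=(1-\rho K)\,u(x)\;+\;\rho\,\varphi(x)\;+\;\rho\,\frac{1-\varepsilon^2}{\varepsilon^2}\,\min_{player}\max_{market}\mathbf{E}_{\vec\alpha,p}\big[u(x+\varepsilon\Delta x)\big],
\]
where the three coefficients $1-\rho K$, $\rho$, $\rho(1-\varepsilon^2)/\varepsilon^2$ are nonnegative (because $\rho K<\tfrac12$) and sum to $1$.

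First I would prove existence and uniqueness by a contraction argument on the complete metric space $X=\{u:\ \|u-\tilde\varphi\|_\infty<\infty\}$ equipped with the distance $\|u-w\|_\infty$. Since $\mathbf{E}_{\vec\alpha,p}$ is an average and $(\vec\alpha,p)\mapsto\min\max$ does not increase the sup norm of a difference, $\big\|\min\max\mathbf{E}[u(\cdot+\varepsilon\Delta x)]-\min\max\mathbf{E}[w(\cdot+\varepsilon\Delta x)]\big\|_\infty\le\|u-w\|_\infty$, so the rewritten formula yields $\|S_\rho(u)-S_\rho(w)\|_\infty\le\big[(1-\rho K)+\rho(1-\varepsilon^2)/\varepsilon^2\big]\|u-w\|_\infty=(1-\rho)\|u-w\|_\infty$; this is a genuine contraction and it upgrades the ball-version Theorem~\ref{con} to a global statement on $X$. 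Moreover $S_\rho$ maps $X$ into itself, because $\|S_\rho(\tilde\varphi)-\tilde\varphi\|_\infty=\rho\|\mathcal{F}_\varepsilon[\tilde\varphi]\|_\infty\le\rho K_1$ by the almost-solution lemma of Subsection~\ref{smooth}. Banach's theorem then produces a unique fixed point $u_\varepsilon\in X$; since $\rho>0$, $S_\rho(u)=u$ is equivalent to $\mathcal{F}_\varepsilon[u]=0$, so $u_\varepsilon$ is the unique solution of the scheme in the stated class. Writing $u_\varepsilon=\lim_k S_\rho^k(\tilde\varphi)$ and summing the resulting geometric series gives $\|u_\varepsilon-\tilde\varphi\|_\infty\le\tfrac1\rho\|S_\rho(\tilde\varphi)-\tilde\varphi\|_\infty\le K_1$, a bound independent of $\varepsilon$; this is property~(1) with $M=K_1$.

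For properties (2) and (3) I would show that $S_\rho$ preserves each structural property and then pass to the limit along $S_\rho^k(\tilde\varphi)\to u_\varepsilon$. For monotonicity: each operation building $S_\rho(u)$ from $u$ in the rewritten formula — multiplication by a nonnegative constant, translation of the argument in $u(x+\varepsilon(\vec v-v_k\vec 1))$, taking $\mathbf{E}_{\vec\alpha,p}$, and taking $\min_{player}$ then $\max_{market}$ — sends a function nondecreasing in each $x_j$ to one nondecreasing in each $x_j$ (for the $\min$ and $\max$ one uses that a pointwise min or max of coordinatewise-nondecreasing functions is again coordinatewise nondecreasing), and adding $\rho\varphi$ is harmless since $\varphi$ is nondecreasing. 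For the translation property: if $u(x+c\vec 1)=u(x)+c$ then each translate inherits this, $\mathbf{E}_{\vec\alpha,p}[u(x+c\vec 1+\varepsilon\Delta x)]=\mathbf{E}_{\vec\alpha,p}[u(x+\varepsilon\Delta x)]+c$, and the constant $c$ passes unchanged through $\min\max$; combining this with $\varphi(x+c\vec 1)=\varphi(x)+c$ and the fact that the three coefficients sum to $1$ gives $S_\rho(u)(x+c\vec 1)=S_\rho(u)(x)+c$. Since $\tilde\varphi$ is nondecreasing in each variable and translation invariant (properties~\eqref{Prop3} and~\eqref{Prop4}), every iterate $S_\rho^k(\tilde\varphi)$ has both properties, and both survive the pointwise (indeed uniform) limit; hence $u_\varepsilon$ is monotone nondecreasing in each $x_j$ and satisfies the translation property.

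The genuinely delicate points here are structural rather than computational: making sure the contraction is set up on the full affine space $X$ rather than merely on a ball, so that uniqueness holds in the class named in the theorem and not just locally, and tracing the bound in property~(1) back to the $\varepsilon$-independent constant $K_1$ of the almost-solution lemma. If one instead follows Oberman's template verbatim and has only the ball-contraction of Theorem~\ref{con}, then the one real obstacle becomes upgrading uniqueness from that ball to all of $X$; this can be closed by a comparison principle for the (proper, degenerate elliptic) scheme — if $\mathcal{F}_\varepsilon[u]\le 0\le\mathcal{F}_\varepsilon[v]$ and $\sup(u-v)<\infty$, one evaluates the resulting inequality $u(x)-v(x)\le\frac{1-\varepsilon^2}{\varepsilon^2}\max_p\mathbf{E}_{\vec\alpha^*,p}\big[(u-v)(x+\varepsilon\Delta x)-(u-v)(x)\big]$ along a sequence on which $u-v$ approaches its supremum and concludes $\sup(u-v)\le 0$ — which is exactly the estimate underlying the direct global-contraction argument above.
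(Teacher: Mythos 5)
Your proof is correct and follows the same skeleton as the paper's: realize $u_\varepsilon$ as the fixed point of the Euler map $S_\rho$, and obtain monotonicity and translation invariance by showing each is preserved by $S_\rho$ and passing through the uniform limit of the iterates (the paper carries this out explicitly only for symmetry, in Lemma~\ref{sym1}, and asserts Lemmas~\ref{mono2} and~\ref{tran2} are analogous). Where you genuinely improve on the written argument is the contraction step. The paper invokes Theorem~\ref{con}, whose proof is deferred to Oberman and which only gives a strict contraction on a ball $\{|u-\tilde\varphi|\le M\}$, and then asserts without detail that ``the assertion holds for all sufficiently large $M$.'' Your rewriting $S_\rho(u)=(1-\rho K)u+\rho\varphi+\rho\frac{1-\varepsilon^2}{\varepsilon^2}\min\max\mathbf{E}[u(\cdot+\varepsilon\Delta x)]$ as a convex combination (the coefficients are nonnegative since $\rho K<\tfrac12$ and sum to $1$ because $K=1+(1-\varepsilon^2)/\varepsilon^2$) yields the explicit global contraction factor $1-\rho$ on the whole affine space $\{u:\|u-\tilde\varphi\|_\infty<\infty\}$, which settles uniqueness in exactly the class named in the theorem statement, and the telescoping bound $\|u_\varepsilon-\tilde\varphi\|_\infty\le\rho K_1\cdot\rho^{-1}=K_1$ traces property~(1) cleanly to the $\varepsilon$-independent constant of the almost-solution lemma. (Note $\rho$ itself must shrink like $\varepsilon^2$, but this cancels in the geometric series, so the bound is indeed uniform in $\varepsilon$.) Your closing remark that the ball-to-global upgrade could alternatively be done via a comparison principle for the proper, degenerate elliptic scheme is also sound, though your direct global-contraction route makes it unnecessary.
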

\begin{proof}
Observe that  $|u-\tilde{\varphi}|$ is bounded if and only if $|u-\varphi|$ is bounded. By  theorem \ref{con}, $S_\rho$ is a strict contraction (with the maximum norm) on the set of functions $\{u\, :\,|u-\tilde{\varphi}| \leq M\}$ for some $M$. Here $M$ is a constant independent of $\varepsilon$.  We realize that the assertion holds for all sufficiently large $M$, independent of $\varepsilon$.  By the contraction mapping theorem,  $S_\rho$ has a unique fixed point in the set above. The solution is obtained by iterating  (with $\rho$ sufficiently small) starting from arbitrary initial data in the ball about $\tilde{\varphi}$ with radius $M$. Being a fixed point, i.e. satisfying $U=U-\rho\mathcal{F}_\varepsilon[U]$, is equivalent to satisfying  $\mathcal{F}_\varepsilon[U]=0$,  which is equivalent to satisfying the geometric dynamic programming principle. Therefore we see that the fixed point of  $S_\rho$, namely $u_\varepsilon$, is the desired solution of the scheme.

We already addressed the growth of our solution. As for monotonicity and translation invariance, we present the proofs in lemmas \ref{mono2} and \ref{tran2} below.
\end{proof}

\begin{lemma}\label{sym1}
The solution $u_\varepsilon$ is symmetric, i.e. we can switch the values of every pair of spatial coordinates without changing the function's value:
\begin{equation}
u_\varepsilon(x_1,x_2,...,x_n) = u_\varepsilon(x_2,x_1,...,x_n).
\end{equation}
\end{lemma}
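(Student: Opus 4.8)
The plan is to exploit the uniqueness statement in Theorem \ref{Euler} together with the invariance of the scheme $\mathcal{F}_\varepsilon$ under permutations of the coordinates. Fix a transposition $\sigma$ of two indices (say it swaps $1$ and $2$), and define $u^\sigma(x) := u_\varepsilon(\sigma x)$, where $\sigma x$ denotes the vector obtained from $x$ by applying $\sigma$ to its entries. I claim that $u^\sigma$ again solves $\mathcal{F}_\varepsilon[u^\sigma] = 0$ and that it lies in the admissible class $\{u : u - \tilde{\varphi} \text{ bounded}\}$; the uniqueness part of Theorem \ref{Euler} then forces $u^\sigma = u_\varepsilon$, which is exactly the asserted symmetry (and, composing transpositions, full permutation symmetry).

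To verify $\mathcal{F}_\varepsilon[u^\sigma] = 0$ I would proceed term by term in the definition of $\mathcal{F}_\varepsilon$. The term $u(x) - \varphi(x)$ is handled by the symmetry of $\varphi$, so that $\varphi(\sigma x) = \varphi(x)$. For the $\min_{player}\max_{market}\mathbf{E}[\,\cdot\,]$ term, the key observation is that the collection of neighbors $\{x + \varepsilon(v - v_k\vec 1) : v \in \{0,1\}^n,\ k\}$ is carried onto the corresponding collection of neighbors of $\sigma x$ under the relabeling $v \mapsto \sigma v$, $k \mapsto \sigma(k)$; this relabeling is a bijection of the market's outcome set $\{0,1\}^n$ onto itself and of the player's index set onto itself, it preserves $\vec 1$, and it preserves the balance condition. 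Reindexing the sums over $p(v)$ and over $\alpha_k$ (and correspondingly relabeling the player's distribution $\vec\alpha$ and the market's distribution $p$, over which the $\min$ and $\max$ range) shows that $\min_{player}\max_{market}\mathbf{E}[u_\varepsilon(\sigma x + \varepsilon\Delta x) - u_\varepsilon(\sigma x)]$ equals $\min_{player}\max_{market}\mathbf{E}[u^\sigma(x + \varepsilon\Delta x) - u^\sigma(x)]$. Hence $\mathcal{F}_\varepsilon[u^\sigma](x) = \mathcal{F}_\varepsilon[u_\varepsilon](\sigma x) = 0$.

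Finally, $u^\sigma - \tilde{\varphi}$ is bounded: by Theorem \ref{Euler} we have $|u_\varepsilon - \tilde{\varphi}| \le M$, and $\tilde{\varphi}$ is itself symmetric, being the convolution of the symmetric $\varphi$ with the radially symmetric mollifier $\eta$, so $|u^\sigma(x) - \tilde{\varphi}(x)| = |u_\varepsilon(\sigma x) - \tilde{\varphi}(\sigma x)| \le M$. Thus $u^\sigma$ lies in the uniqueness class, and Theorem \ref{Euler} gives $u^\sigma = u_\varepsilon$, completing the proof.

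The main obstacle is purely the bookkeeping in the second paragraph: one must check carefully that the relabeling $(v,k)\mapsto(\sigma v,\sigma(k))$ is compatible simultaneously with the expectation, with the $\min\max$ (both distributions get relabeled but the optimization sets are unchanged), and with the neighbor-ordering convention used to define $F_\varepsilon^x$. Once one is convinced the scheme is genuinely permutation-equivariant, everything else is immediate. An alternative route that sidesteps even appealing to uniqueness: observe that the Euler map $S_\rho$ sends symmetric functions to symmetric functions (again by permutation-equivariance of $\mathcal{F}_\varepsilon$), so iterating the contraction of Theorem \ref{con} starting from the symmetric function $\tilde{\varphi}$ produces a symmetric fixed point, which is $u_\varepsilon$.
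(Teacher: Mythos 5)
Your proposal is correct, and it is interesting that it inverts the paper's emphasis. The paper's written proof is your ``alternative route'': it observes that $S_\rho$ coincides with the map $\psi \mapsto \psi(1-\rho/\varepsilon^2) + \rho\varphi + \frac{\rho}{\varepsilon^2}(1-\varepsilon^2)\min_{player}\max_{market}\mathbf{E}[\psi(\cdot+\varepsilon\Delta x)]$, which sends symmetric functions to symmetric functions because the two experts play symmetric roles, and then iterates from the symmetric $\varphi$ and passes symmetry through the contraction limit. Your primary argument --- set $u^\sigma(x):=u_\varepsilon(\sigma x)$, check the permutation-equivariance $\mathcal{F}_\varepsilon[u^\sigma](x)=\mathcal{F}_\varepsilon[u_\varepsilon](\sigma x)=0$ via the relabeling $(v,k)\mapsto(\sigma v,\sigma(k))$ of the market's outcomes and the player's indices, verify $|u^\sigma-\tilde\varphi|\le M$ using the symmetry of $\tilde\varphi=\varphi*\eta$, and invoke the uniqueness clause of Theorem \ref{Euler} --- is exactly the argument the paper alludes to in its opening sentence (``a consequence of uniqueness'') but declines to carry out. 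Your version has the merit of making the equivariance bookkeeping explicit, which is the only real content of either route; the paper's induction has the merit of not needing to verify that $u^\sigma$ lies in the uniqueness class, since it never leaves the ball on which the contraction acts. One small point worth recording either way: the relabeling preserves the constraint sets over which $\min_{player}$ and $\max_{market}$ range (the simplex of $\vec\alpha$'s and the set of distributions on $\{0,1\}^n$ are permutation-invariant), and the neighbor-ordering convention of Subsection \ref{DFS} is immaterial because the $\min\max$ is over distributions, not over an ordered list.
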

\begin{proof}
This is a consequence of uniqueness but for clarity we prove it using induction.

For simplicity of notation we prove the above claim for $x_1$ and $x_2$.
The proof goes by induction on the iterates of the Euler map $S_\rho$.
Consider any $\varepsilon >0$, small.
Firstly, $\varphi$ is symmetric, i.e. $\varphi(x_1,x_2,...,x_n) = \varphi(x_2,x_1,...,x_n).$
Next, suppose $\psi$ is symmetric, i.e.  $\psi(x_1,x_2,...,x_n) = \psi(x_2,x_1,...,x_n).$ Then we observe that the function
$$f(x)=\psi(x)(1-\frac{\rho}{\varepsilon^2}) +\rho\varphi(x)+\frac{\rho}{\varepsilon^2}(1-\varepsilon^2)\min_{player}\max_{market}\mathbf{E}\psi(x+\varepsilon\Delta x) $$
is also symmetric since experts $1$ and $2$ have symmetric roles in the game. Observe that the function $f$ above is simply equal to $S_\rho$:
$$S_\rho = u(x) - \rho[ u(x) - \varphi(x) -\frac{1-\varepsilon^2}{\varepsilon^2}\min_{player}\max_{market}\mathbf{E}[u(x+\varepsilon\Delta x)-u(x)]] = f(x).$$
Thus if $\psi$ is symmetric, then $S_\rho(\psi)$ is symmetric.
So we iterate applying the Euler map $S_\rho$, starting from the symmetric $\varphi$.
By theorem \ref{Euler}, the iterates of the Euler map converge to the unique solution $u_\varepsilon$ to $\mathcal{F}_\varepsilon$.
We pass the symmetry property through the limit, obtaining that $u_\varepsilon$ is symmetric.
\end{proof}

\begin{lemma}\label{mono2}
The solution $u_\varepsilon$ is monotone, i.e. if $\tilde{x}_1\geq x_1$, then
\begin{equation}
u_\varepsilon(\tilde{x}_1,x_2,...,x_n) \geq u_\varepsilon(x_1,x_2,...,x_n).
\end{equation}
This property follows for every coordinate, as the proof for all other coordinates is identical.
\end{lemma}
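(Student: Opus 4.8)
The plan is to mirror the proof of Lemma~\ref{sym1}: iterate the Euler map $S_\rho$ starting from a monotone function, show that $S_\rho$ preserves monotonicity in each variable, and pass the property through the limit. First I would fix $\rho$ with $\rho K < 1/2$, so that (since here $K = 1/\varepsilon^2$) we have $\rho/\varepsilon^2 < 1/2$; this guarantees the coefficient $1 - \rho/\varepsilon^2$ appearing below is strictly positive. Then I would rewrite $S_\rho(\psi)$ exactly as in Lemma~\ref{sym1}, namely
$$S_\rho(\psi)(x) = \left(1 - \frac{\rho}{\varepsilon^2}\right)\psi(x) + \rho\,\varphi(x) + \frac{\rho(1-\varepsilon^2)}{\varepsilon^2}\,\min_{player}\max_{market}\mathbf{E}[\psi(x+\varepsilon\Delta x)],$$
and argue term by term that if $\psi$ is nondecreasing in each variable then so is $S_\rho(\psi)$.

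The first two terms are immediate: $(1-\rho/\varepsilon^2)\psi$ is monotone because the coefficient is positive and $\psi$ is monotone, and $\rho\,\varphi$ is monotone because $\varphi$ is nondecreasing in each variable (a standing assumption on $\varphi$). The only point requiring care is the min--max term. Here I would observe that the feasible sets for both entities (the simplex of vectors $\vec\alpha$, and the probability distributions $p$ on $\{0,1\}^n$) and the increments $\Delta x = \vec v - v_k\vec 1$ do not depend on the state $x$. Hence, if $\tilde x \geq x$ componentwise, then $\tilde x + \varepsilon\Delta x \geq x + \varepsilon\Delta x$ componentwise for every realization of the strategies, so by monotonicity of $\psi$ we get $\mathbf{E}[\psi(\tilde x+\varepsilon\Delta x)] \geq \mathbf{E}[\psi(x+\varepsilon\Delta x)]$ for every fixed pair $(\vec\alpha, p)$; taking $\max_{market}$ and then $\min_{player}$ preserves this inequality. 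Therefore the min--max term, and with it $S_\rho(\psi)$, is nondecreasing in each coordinate.

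Finally I would start the iteration from $\tilde{\varphi}$, which is monotone by~(\ref{Prop3}) and lies in the ball of radius $M$ centered at $\tilde{\varphi}$ on which $S_\rho$ is a strict contraction (Theorem~\ref{con}); by Theorem~\ref{Euler} the iterates converge uniformly to $u_\varepsilon$. Since the monotonicity property is just a family of pointwise inequalities $u(\tilde x) \geq u(x)$, it is preserved under uniform (indeed pointwise) limits, so $u_\varepsilon$ is monotone nondecreasing in each variable, as claimed; the argument for any other coordinate is identical by symmetry of the setup. I do not expect a genuine obstacle here — the one step that must be gotten right is the positivity of the coefficient $1-\rho/\varepsilon^2$, which is precisely why the smallness condition $\rho K < 1/2$ was imposed, together with the observation that the strategy sets and the increments $\Delta x$ are state-independent so that the min--max respects the pointwise order.
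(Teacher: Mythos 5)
Your proposal is correct and follows exactly the route the paper intends: the paper's proof of this lemma is just the remark that the argument is similar to Lemma~\ref{sym1}, i.e.\ iterate the Euler map from a monotone initial function, check that $S_\rho$ preserves monotonicity (using positivity of the coefficient $1-\rho/\varepsilon^2$ and the state-independence of the strategy sets and increments), and pass the pointwise inequalities through the limit. You have simply filled in the details the paper leaves implicit.
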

\begin{proof}
The argument here is similar to the one in Lemma \ref{sym1}.

\end{proof}

\begin{lemma}\label{tran2}
The solution $u_\varepsilon$  has the following property: for any $c \in \RR$, and any  $(x_1,x_2,...,x_n)\in \RR^n$
\begin{equation}\label{move}
u_\varepsilon(x_1+c,x_2+c,...,x_n+c) = u_\varepsilon(x_1,x_2,...,x_n)+c.
\end{equation}
\end{lemma}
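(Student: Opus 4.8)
The plan is to argue exactly as in the proofs of Lemmas~\ref{sym1} and~\ref{mono2}: since Theorem~\ref{Euler} produces $u_\varepsilon$ as the sup-norm limit of the iterates $S_\rho^m(\tilde\varphi)$, it suffices to show that the translation property~\eqref{tranp} is preserved by a single application of the Euler map $S_\rho$ and then to pass to the limit.

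First I would isolate the structural fact that drives the whole argument: the increment $\Delta x = \vec v - v_l\vec 1$ depends only on the market's and the player's choices, not on the current regret vector $x$, so it is unaffected by the shift $x\mapsto x+c\vec 1$. Hence, if $\psi$ satisfies~\eqref{tranp}, then for each fixed pair of mixed strategies $(\vec\alpha,p)$ we have $\mathbf{E}_{\vec\alpha,p}[\psi(x+c\vec 1+\varepsilon\Delta x)] = \mathbf{E}_{\vec\alpha,p}[\psi(x+\varepsilon\Delta x)]+c$; since adding a constant commutes with $\min$ and $\max$, the same holds for $\min_{player}\max_{market}\mathbf{E}[\psi(x+\varepsilon\Delta x)]$, and consequently $\min_{player}\max_{market}\mathbf{E}[\psi(x+\varepsilon\Delta x)-\psi(x)]$ is in fact \emph{invariant} under the shift (the two occurrences of $c$ cancel).

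Next I would run the induction step. Writing
$$S_\rho(\psi)(x) = \psi(x) - \rho\Big[\psi(x) - \varphi(x) - \frac{1-\varepsilon^2}{\varepsilon^2}\min_{player}\max_{market}\mathbf{E}[\psi(x+\varepsilon\Delta x)-\psi(x)]\Big]$$
and evaluating at $x+c\vec 1$, the term $\psi(x+c\vec 1)=\psi(x)+c$ and the term $-\varphi(x+c\vec 1)=-\varphi(x)-c$ inside the bracket cancel (using the translation invariance of $\varphi$), while the $\min\max$ term is unchanged by the previous paragraph; only the leading $\psi(x+c\vec 1)=\psi(x)+c$ survives, giving $S_\rho(\psi)(x+c\vec 1)=S_\rho(\psi)(x)+c$. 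Thus $S_\rho$ maps functions satisfying~\eqref{tranp} to functions satisfying~\eqref{tranp}. Starting the iteration at $\tilde\varphi$ --- which lies in the ball about itself on which $S_\rho$ contracts and satisfies~\eqref{tranp} by~\eqref{Prop4} --- and invoking Theorems~\ref{con} and~\ref{Euler}, the iterates converge uniformly to $u_\varepsilon$; since~\eqref{move} is a pointwise closed condition it survives the limit, which proves the lemma.

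I do not expect a genuine obstacle here: the computation is routine. The only point deserving care is the cancellation of the two $c$'s inside the bracket defining $S_\rho$, which is exactly where property~\eqref{tranp} for $\varphi$ itself is used --- and it is precisely this cancellation that would fail for a $\varphi$ lacking the translation property.
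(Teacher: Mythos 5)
Your proposal is correct and follows exactly the route the paper intends: the paper's proof of this lemma simply defers to the argument of Lemma~\ref{sym1}, i.e.\ show the property is preserved by each application of the Euler map $S_\rho$ (using that $\Delta x$ is independent of $x$ and that $\varphi$ itself is translation invariant) and then pass it through the uniform limit of the iterates guaranteed by Theorems~\ref{con} and~\ref{Euler}. Your cancellation computation inside $\mathcal{F}_\varepsilon$ is the right one, so nothing further is needed.
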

\begin{proof}
The argument here is similar to the one in Lemma \ref{sym1}.
\end{proof}

\subsection{Growth and Qualitative Behavior of the Solutions to the Finite Horizon Problem}

In the previous subsection, we showed that the solution to the discrete geometric stopping problem has at most linear growth as $|x| \to \infty$.  We now show that the discrete solution of the finite horizon problem also has at most linear growth in $x$. This is achieved by the following theorem:

\begin{theorem}\label{Th5}
A solution $w_\varepsilon$ to the time-dependent dynamic programming principle (\ref{sTDPP}) exists and is unique. In addition, it satisfies
\begin{equation}
 |w_\varepsilon(t, \cdot) - \tilde{\varphi}|_\infty < C(T-t +1),
\end{equation}
with a constant $C$ that is independent of $\varepsilon$. Moreover,
\begin{enumerate}
\item $w_\varepsilon(t,x)$ grows at most linearly as $|x| \to \infty$ (with a bound that is uniform as $\varepsilon \to 0$)
\item The function $w_\varepsilon$  is monotone nondecreasing in each variable $x_j$
\item The function $w_\varepsilon$ satisfies translation invariance, i.e.
$$w_\varepsilon(t,x_1+c,x_2+c,...,x_n+c) = w_\varepsilon(t,x_1,x_2,...,x_n)+c.$$
\end{enumerate}
\end{theorem}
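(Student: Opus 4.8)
I would solve the time-dependent DPP (\ref{sTDPP}) by backward induction in time, and then propagate each asserted property --- the uniform growth bound, monotonicity, translation invariance --- through that induction. Introduce the one-step operator
\begin{equation*}
(\mathcal{T}_\varepsilon \psi)(x) := \min_{player}\max_{market}\mathbf{E}[\psi(x+\varepsilon\Delta x)].
\end{equation*}
For any $\psi$ this is well defined: with $x$ fixed the expectation is a finite sum over the outcomes $v\in\{0,1\}^n$ and the followed expert, and the payoff is bilinear in the player's mixed strategy $\vec\alpha$ (running over a simplex) and the market's distribution $p$ (running over the probability simplex on $\{0,1\}^n$), so the $\min\max$ is attained by compactness. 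For $\varepsilon$ with $T/\varepsilon^2\in\mathbb{N}$ (and $t$ on the grid $T-\varepsilon^2\mathbb{N}$, as in the scaled game), equation (\ref{sTDPP}) is solved precisely by $w_\varepsilon(T,\cdot)=\varphi$ and $w_\varepsilon(T-m\varepsilon^2,\cdot)=\mathcal{T}_\varepsilon^{\,m}\varphi$; existence and uniqueness are then immediate, since each backward step is a single application of $\mathcal{T}_\varepsilon$.

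For the growth bound the two ingredients are: (i) $\mathcal{T}_\varepsilon$ is nonexpansive in the sup norm --- because $\mathbf{E}$ is an average, $\min$ and $\max$ are $1$-Lipschitz, and $\psi\mapsto\psi(\cdot+\varepsilon\Delta x)$ is an isometry, which is exactly the argument already used for the Lipschitz continuity of $\mathcal{F}_\varepsilon$; and (ii) $\tilde\varphi$ is an \emph{almost} fixed point of $\mathcal{T}_\varepsilon$. For (ii), since a constant passes freely through $\min$, $\max$, and $\mathbf{E}$, one has $(\mathcal{T}_\varepsilon\tilde\varphi)(x)-\tilde\varphi(x)=\min_{player}\max_{market}\mathbf{E}[\tilde\varphi(x+\varepsilon\Delta x)-\tilde\varphi(x)]$, which is bounded by $C\varepsilon^2$ uniformly in $x$ and $\varepsilon$ by estimate (\ref{Emin}). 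Iterating (ii) with (i) gives $|\mathcal{T}_\varepsilon^{\,m}\tilde\varphi-\tilde\varphi|_\infty\le mC\varepsilon^2$, while (i) together with (\ref{Prop2}) gives $|\mathcal{T}_\varepsilon^{\,m}\varphi-\mathcal{T}_\varepsilon^{\,m}\tilde\varphi|_\infty\le|\varphi-\tilde\varphi|_\infty\le K$. Setting $m\varepsilon^2=T-t$ and applying the triangle inequality yields $|w_\varepsilon(t,\cdot)-\tilde\varphi|_\infty\le K+C(T-t)\le C'(T-t+1)$ with $C'$ independent of $\varepsilon$; combined with the linear growth $|\tilde\varphi(x)|\le C_1|x|+C_2$ inherited from $\varphi$, this gives assertion~1.

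Monotonicity and translation invariance follow by the same backward induction, in the style of Lemmas \ref{sym1}, \ref{mono2}, and \ref{tran2}. The terminal datum $\varphi$ is nondecreasing in each $x_j$ and has the translation property, and $\mathcal{T}_\varepsilon$ preserves both: if $\psi$ is nondecreasing then for $\tilde x\ge x$ coordinatewise one has $\psi(\tilde x+\varepsilon\Delta x)\ge\psi(x+\varepsilon\Delta x)$ for every realization, and this inequality survives $\mathbf{E}$, $\max_{market}$, and $\min_{player}$; and if $\psi$ has the translation property then, since $\Delta x$ does not depend on the state, $(\mathcal{T}_\varepsilon\psi)(x+c\vec 1)=\min_{player}\max_{market}\mathbf{E}[\psi(x+\varepsilon\Delta x)+c]=(\mathcal{T}_\varepsilon\psi)(x)+c$. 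Propagating these backward from $t=T$ proves assertions 2 and 3 at every grid time $t$ (the same argument also yields symmetry, as in Lemma \ref{sym1}).

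The only delicate point is the \emph{uniformity in $\varepsilon$} of the growth bound, which hinges entirely on the $O(\varepsilon^2)$ --- rather than merely $O(\varepsilon)$ --- nature of estimate (\ref{Emin}). That estimate holds because the $\varepsilon$-order term in the Taylor expansion of $\mathcal{T}_\varepsilon\tilde\varphi-\tilde\varphi$ vanishes: by Lemma \ref{balance} the market may be taken to play a balanced strategy, for which $\mathbf{E}[\Delta x_k]=0$. Were the one-step error only $O(\varepsilon)$, it would accumulate like $\varepsilon^{-1}$ over the $(T-t)/\varepsilon^2$ backward steps and the bound would degenerate as $\varepsilon\to 0$; the first-order cancellation is precisely what keeps the accumulated error at $O(T-t)$, independent of $\varepsilon$. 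Everything else is a routine induction.
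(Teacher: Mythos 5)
Your proposal is correct and follows essentially the same route as the paper: backward induction on the time grid, with existence and uniqueness immediate from the one-step construction, the growth bound obtained by combining the $O(\varepsilon^2)$ one-step estimate (\ref{Emin}) for $\tilde{\varphi}$ with the nonexpansiveness of the one-step operator over the $(T-t)/\varepsilon^2$ steps, and monotonicity/translation invariance propagated inductively as in Lemma \ref{sym1}. Your splitting into $|\mathcal{T}_\varepsilon^{\,m}\varphi-\mathcal{T}_\varepsilon^{\,m}\tilde{\varphi}|$ plus $|\mathcal{T}_\varepsilon^{\,m}\tilde{\varphi}-\tilde{\varphi}|$ is only a cosmetic reorganization of the paper's single induction, which folds both ingredients into one inductive step.
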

\begin{proof}
Existence and uniqueness follow directly from the dynamic programming principle: solutions are built one time step at a time: at levels $T, T-\varepsilon^2, T-2\varepsilon^2, ...$.  The proof of the estimate is by induction on the number of time steps.
For $t=T$,  $w_\varepsilon(t, \cdot) =\varphi$ by definition and the bound is an immediate consequence of our choice of  $\tilde{\varphi}$ (a smoothed out version of $\varphi$, see \ref{Prop2}). For the inductive step, suppose the bound holds at $t=T-k\varepsilon^2$, i.e.
$$ |w_\varepsilon(t, x) - \tilde{\varphi}(x)|_\infty < C(T-t +1) .$$
Then, let us consider what happens at $t-\varepsilon^2$. The argument used to prove (\ref{Emin}) shows that for the optimal choices of strategy by the market and the player, the following holds:
 $$E =| \mathbf{E}[\tilde{\varphi}(x+\varepsilon\Delta x)-\tilde{\varphi}(x)]  | \leq C\varepsilon^2.$$
We use this in the second line of the estimate:
\begin{eqnarray*}
|w_\varepsilon(t-\varepsilon^2,x)-\tilde{\varphi}| &=& |\min_{player}\max_{market}\mathbf{E}[w_\varepsilon(t, x+\varepsilon\Delta x)]-\tilde{\varphi}(x)|\\
&=& |\min_{player}\max_{market}\mathbf{E}[w_\varepsilon(t, x+\varepsilon\Delta x)-\tilde{\varphi}(x+\varepsilon\Delta x)+\tilde{\varphi}(x+\Delta x)-\tilde{\varphi}(x)]|\\
&\leq& |C(T-t +1)+\min_{player}\max_{market}(\tilde{\varphi}(x+\varepsilon\Delta x)-\tilde{\varphi}(x))|  \\
&=&  C(T-t +1)+C\varepsilon^2\\
&=&  C(T-(t-\varepsilon^2)+1).
\end{eqnarray*}
This concludes the inductive step.
\end{proof}
The symmetry, monotonicity, and translation invariance properties are easily established inductively, using arguments parallel to the one used for Lemma \ref{sym1}.

\section{Review of Known Results about Viscosity Solutions of our PDEs\label{chap:four}}\label{EU}

In section 3 we showed that the discrete solutions to the finite horizon and geometric stopping problems have at most linear growth as $|x| \rightarrow \infty$. We will prove in section \ref{CV} that the solutions converge as $\varepsilon \to 0$ to the viscosity solution of the appropriate PDE. Since the discrete solutions have linear growth as $|x| \to \infty$ (with a bound that is independent of $\varepsilon$), we only need to concern ourselves with at most linear growth solutions to the PDEs.

The existence and uniqueness of viscosity solutions of our PDE's (with at most linear growth at $\infty$) are well known. This short section provides the relevant definitions and results.

\subsection{The Time Dependent Case}
The following definitions are standard.
\begin{definition}
A real-valued, lower-semicontinuous function $w(t,x)$ defined for $x \in \RR^n$ and $t \leq T$ is a viscosity supersolution of the final-value problem \eqref{parE} if for any $(t_0, x_0)$ with $t_0 < T$ and any smooth $\psi(t,x)$ such that $w-\psi$ has a local minimum at $(t_0, x_0)$ we have
$$\psi_t(t_0,x_0) + \frac{1}{2}\max_{v\in\{0,1\}^n}\langle D^2\psi(t_0,x_0)\cdot v, v\rangle \leq 0,$$
and $w \geq \varphi$ at the final time $t=T$.
\end{definition}
\begin{definition}
A real-valued, upper-semicontinuous function $w(t,x)$ defined for $x \in \RR^n$ and $t \leq T$ is a viscosity subsolution of the final-value problem \eqref{parE} if for any $(t_0, x_0)$ with $t_0 < T$ and any smooth $\psi(t,x)$ such that $w-\psi$ has a local maximum at $(t_0, x_0)$ we have
$$\psi_t(t_0,x_0) + \frac{1}{2}\max_{v\in\{0,1\}^n}\langle D^2\psi(t_0,x_0)\cdot v, v\rangle \geq 0,$$
and $w \leq \varphi$ at the final time $t=T$.
\end{definition}
\begin{definition}
A viscosity solution of the final-value problem \eqref{parE} is a continuous function $w$ that is both a subsolution and a supersolution.
\end{definition}

\begin{theorem}\label{comp} The final-value problem \eqref{parE} - informally written as
$$w_t + \frac{1}{2}\max_{v\in\{0,1\}^n}\langle D^2w\cdot v, v\rangle = 0,$$
subject to $w(T,x) = \varphi(x)$ -  has a unique viscosity solution $w$ that grows at most linearly and $w$ is uniformly continuous.
Moreover, if $w_1$ is a subsolution, and $w_2$ is a supersolution, then necessarily $w_1 \leq w_2$.

\end{theorem}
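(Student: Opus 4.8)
I would follow the classical route for such problems. Write the equation as $w_t+F(D^2w)=0$ with $F(M):=\tfrac12\max_{v\in\{0,1\}^n}\langle Mv,v\rangle$. The argument rests on the structure of $F$: it depends on neither $t$, $x$, nor $w$; it is Lipschitz in $M$; being a maximum of linear functions it is convex, subadditive ($F(A+B)\le F(A)+F(B)$) and positively $1$-homogeneous; it is nondecreasing in $M$ (this is exactly Lemma~\ref{dege}); and, since $\vec 0\in\{0,1\}^n$, it is nonnegative, with $F(2\mu\,\mathrm{Id})=\mu n$ and $F(-2\mu\,\mathrm{Id})=0$. The steps are: (i) prove the comparison principle, which yields at once the uniqueness assertion and the last sentence of the theorem; (ii) construct a solution by Perron's method, using explicit barriers built from the mollification $\tilde\varphi$ of Subsection~\ref{smooth}; (iii) read off the linear growth from those barriers, spatial uniform continuity from translation invariance, and continuity in $t$ (up to $t=T$) from comparison with mollified-cone barriers.

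For the comparison principle, let $w_1$ be a subsolution and $w_2$ a supersolution, both of at most linear growth, and suppose for contradiction $\theta:=\sup(w_1-w_2)>0$. The obstruction is that $\RR^n$ is unbounded and $w_1-w_2$ need only grow linearly, so its supremum need not be attained; I would remove this by perturbing $w_2$ to
$$w_2^\mu(t,x):=w_2(t,x)+\mu\bigl(|x|^2+(n+1)(T-t)\bigr),\qquad \mu>0.$$
Using subadditivity and $1$-homogeneity of $F$ one checks that $w_2^\mu$ is a \emph{strict} supersolution, $(w_2^\mu)_t+F(D^2w_2^\mu)\le-\mu$ in the viscosity sense; also $w_1(T,\cdot)\le w_2^\mu(T,\cdot)$ (indeed $w_1-w_2^\mu\le-\mu|x|^2$ there), and $w_1-w_2^\mu\to-\infty$ as $|x|\to\infty$, uniformly in $t$. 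For $\mu$ small, $\sup(w_1-w_2^\mu)\ge\theta/2>0$ and is attained. Now run the standard doubling of variables: for $\alpha>0$ maximize $w_1(t,x)-w_2^\mu(t,y)-\tfrac1{2\alpha}|x-y|^2$ (coercive thanks to the $\mu|x|^2$ term), use the usual penalization lemma to get a near-maximizer $(t_\alpha,x_\alpha,y_\alpha)$ with $t_\alpha<T$ and $\tfrac1\alpha|x_\alpha-y_\alpha|^2\to0$, and apply the parabolic Crandall--Ishii lemma to produce a common time-slope $b$ and matrices $X\le Y$ with $(b,\cdot,X)\in\overline{\mathcal P}^{2,+}w_1$ and $(b,\cdot,Y)\in\overline{\mathcal P}^{2,-}w_2^\mu$ at the relevant points. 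Then $b+F(X)\ge0$ and $b+F(Y)\le-\mu$, hence $F(X)-F(Y)\ge\mu$; but $X\le Y$ forces $F(X)\le F(Y)$ by monotonicity of $F$ (Lemma~\ref{dege}), a contradiction. Therefore $w_1\le w_2$, which is the last clause of the theorem, and uniqueness of a solution follows.

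For existence I would apply Perron's method between $\overline w(t,x):=\tilde\varphi(x)+K+C_0(T-t)$ and $\underline w(t,x):=\tilde\varphi(x)-K-C_0(T-t)$, where $K:=|\varphi-\tilde\varphi|_\infty$ and $C_0:=\|D^2\tilde\varphi\|_\infty$, finite by \eqref{Prop1}. Both are smooth, of linear growth, and satisfy $\underline w\le\overline w$ and $\underline w(T,\cdot)\le\varphi\le\overline w(T,\cdot)$; moreover $\overline w_t+F(D^2\overline w)=-C_0+F(D^2\tilde\varphi)\le0$ and $\underline w_t+F(D^2\underline w)=C_0+F(D^2\tilde\varphi)\ge C_0>0$ (using $F\ge0$), so they are a super- and a subsolution. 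Perron's method (valid because comparison holds) yields a viscosity solution $w$ with $\underline w\le w\le\overline w$, hence of linear growth. The one delicate point, that $w$ attains the terminal datum $w(T,\cdot)=\varphi$ and is continuous up to $t=T$, is handled by the standard mollified-cone barriers $\varphi(x_0)\pm\bigl(L\sqrt{|x-x_0|^2+\eta^2}+\Lambda_\eta(T-t)\bigr)$, with $L$ the Lipschitz constant of $\varphi$ and $\Lambda_\eta\sim 1/\eta$, optimized over $\eta$ to obtain $|w(t,x)-\varphi(x)|\le C\sqrt{T-t}$. Spatial uniform continuity is then immediate: since the equation is translation invariant in $x$, $w(\cdot,\cdot+h)$ is the solution with terminal data $\varphi(\cdot+h)$, so by comparison $|w(t,x+h)-w(t,x)|\le|\varphi(\cdot+h)-\varphi|_\infty\le L|h|$ for every $t$; continuity in $t$ on $[0,T]$ follows by comparing $w$ with barriers $w(s,\cdot)\pm\omega(s-t)$ (mollifying the $L$-Lipschitz slice $w(s,\cdot)$ as was done for $\tilde\varphi$), giving uniform continuity on $[0,T]\times\RR^n$.

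The step I expect to be the genuine obstacle is the comparison principle on the unbounded domain with only linearly growing sub/supersolutions; the quadratic-in-$x$, linear-in-$t$ penalization above is precisely what is needed, and its strict-supersolution property rests on the subadditivity and $1$-homogeneity of $F$, which keep the $\mu|x|^2$ correction from overwhelming the equation. The secondary difficulty is checking in Perron's method that the solution assumes the terminal condition and is continuous up to $t=T$. Everything else — stability of sub/supersolutions under sup/inf, the Perron bump argument, and propagation of monotonicity, symmetry, and the translation property through the construction — is routine, and much of it (comparison for degenerate parabolic equations, Perron's method) is classical and can simply be invoked, which is why the theorem is quoted here rather than reproved in full.
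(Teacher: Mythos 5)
Your argument is correct, but it is a genuinely different route from the paper's: the paper does not reprove well-posedness at all, it disposes of Theorem~\ref{comp} in one line by invoking Theorem~2.1 of Giga--Goto--Ishii--Sato \cite{GGIS} applied backwards in time, which covers exactly this class of degenerate parabolic equations with uniformly continuous, linearly growing data. You instead reconstruct the classical proof from scratch, and the reconstruction is sound: the quadratic-in-$x$ penalization $w_2+\mu\bigl(|x|^2+(n+1)(T-t)\bigr)$ does produce a strict supersolution, since subadditivity gives $F(D^2\psi)\le F(D^2\psi-2\mu\,\mathrm{Id})+F(2\mu\,\mathrm{Id})$ and $F(2\mu\,\mathrm{Id})=\mu n$, so the $(n+1)$ time-slope leaves the margin $-\mu$; the Crandall--Ishii step and the Perron construction between $\tilde\varphi\pm(K+C_0(T-t))$ are the standard ones (modulo the harmless adjustment that $C_0$ must dominate $\sup_x F(D^2\tilde\varphi(x))$, which scales like $n\|D^2\tilde\varphi\|_\infty$ rather than $\|D^2\tilde\varphi\|_\infty$). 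What the paper's citation buys is brevity and generality; what your self-contained version buys is an explicit accounting of which structural features of $F$ are actually used -- monotonicity (Lemma~\ref{dege}) for comparison, subadditivity and homogeneity for the unbounded-domain penalization, $F\ge 0$ and the value of $F$ on multiples of the identity for the barriers -- plus quantitative by-products such as $|w(t,x)-\varphi(x)|\le C\sqrt{T-t}$ and the spatial Lipschitz bound inherited from $\varphi$, neither of which the paper records. You correctly anticipated at the end that a result of this type would simply be quoted rather than reproved, and indeed that is what the paper does.
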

\begin{proof}
The statement is a special case of theorem 2.1 in \cite{GGIS}, applied  backwards in time.
\end{proof}

\subsection{The Stationary Case}

Now we focus on viscosity solutions for the stationary equation. As before, the following definitions are well-known.
\begin{definition}
A real-valued, lower-semicontinuous function $u(x)$ defined for $x \in \RR^n$ is a viscosity supersolution of the stationary problem \eqref{Equa} if for any $x_0 \in \RR^n$ and any smooth $\psi(x)$ such that $u-\psi$ has a local minimum at $ x_0$ we have
$$\psi(x_0) -\varphi(x_0) -\frac{1}{2}\max_{v\in\{0,1\}^n}\langle D^2\psi(x_0)\cdot v, v\rangle \geq 0.$$
\end{definition}
\begin{definition}
A real-valued, upper-semicontinuous function $u(x)$ defined for $x \in \RR^n$ is a viscosity subsolution of the stationary problem \eqref{Equa} if for any $x_0 \in \RR^n$ and any smooth $\psi(x)$ such that $u-\psi$ has a local maximum at $x_0$ we have
$$\psi(x_0) -\varphi(x_0) -\frac{1}{2}\max_{v\in\{0,1\}^n}\langle D^2\psi(x_0)\cdot v, v\rangle \leq 0.$$
\end{definition}
\begin{definition}
A viscosity solution of \eqref{Equa} is a continuous function $u$ that is both a subsolution and a supersolution.
\end{definition}

\begin{theorem} \label{comp-elliptic} The stationary equation \eqref{Equa}, informally written as
$$u -\varphi -\frac{1}{2}\max_{v\in\{0,1\}^n}\langle D^2u\cdot v, v\rangle= 0,$$
has a unique viscosity solution $u$ that is uniformly continuous and grows at most linearly at infinity.
\end{theorem}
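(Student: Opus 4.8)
\medskip
\noindent\textbf{Proof proposal.} The plan is to reduce everything to a comparison principle, from which uniqueness is immediate and the qualitative properties follow cheaply; existence is then supplied either by Perron's method or, a posteriori, by the convergent scheme of Section \ref{CV}. Two structural features do all the work: the zeroth-order term $+u$ enters \eqref{Equa} with a favorable sign, and the second-order operator $F(M):=\tfrac12\max_{v\in\{0,1\}^n}\langle Mv,v\rangle$ is degenerate elliptic (Lemma \ref{dege}), positively $1$-homogeneous, and convex, hence \emph{subadditive}: $F(M+N)\le F(M)+F(N)$.

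\medskip
\noindent\emph{Step 1: a comparison principle in the class of linear-growth functions.} Let $u_1$ be an upper-semicontinuous subsolution and $u_2$ a lower-semicontinuous supersolution of \eqref{Equa}, both of at most linear growth; I claim $u_1\le u_2$. Since solutions are only controlled linearly on all of $\RR^n$, I first confine the supremum of $u_1-u_2$ with a quadratic barrier. Set $\Psi(x)=1+|x|^2+n$; because $D^2\Psi=2I$ and $v\in\{0,1\}^n$ satisfies $|v|^2\le n$, one has $F(D^2\Psi)=n$, so $\Psi-F(D^2\Psi)=1+|x|^2>0$, i.e. $\Psi$ is a classical supersolution of the \emph{homogeneous} equation. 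Using subadditivity of $F$ one checks that $u_2+\eta\Psi$ is then a viscosity supersolution of \eqref{Equa} for every $\eta>0$. Now $u_1-u_2-\eta\Psi\to-\infty$ as $|x|\to\infty$ (linear minus quadratic), so its supremum is attained. Applying the Crandall--Ishii lemma to $u_1(x)-(u_2+\eta\Psi)(y)-\tfrac1{2\sigma}|x-y|^2$ produces, at a maximizing pair $(\hat x,\hat y)$, symmetric matrices $X\le Y$ satisfying the sub/supersolution inequalities; subtracting them and using $F(X)\le F(Y)$ (monotonicity of $F$) together with the global Lipschitz bound on $\varphi$ gives
$$u_1(\hat x)-(u_2+\eta\Psi)(\hat y)\ \le\ \varphi(\hat x)-\varphi(\hat y)\ \le\ L\,|\hat x-\hat y|,$$
where $L$ is the Lipschitz constant of $\varphi$. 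Since $|\hat x-\hat y|\to0$ as $\sigma\to0$, we get $\sup(u_1-u_2-\eta\Psi)\le 0$; letting $\eta\to0$ yields $u_1\le u_2$. (An equivalent route is simply to quote the stationary analogue of the comparison/existence results for degenerate equations on unbounded domains already invoked for Theorem \ref{comp}.)

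\medskip
\noindent\emph{Step 2: uniqueness, existence, and regularity.} Uniqueness of a linear-growth viscosity solution is immediate from Step 1. For existence there are two options: (a) Perron's method---suitable translates of $\pm\Psi$ are super/subsolutions of \eqref{Equa}, so the Perron envelope is, by Step 1, \emph{the} viscosity solution; or (b) invoke Section \ref{CV}---the discrete solutions $u_\varepsilon$ of Theorem \ref{Euler} satisfy $|u_\varepsilon-\tilde\varphi|\le M$ uniformly in $\varepsilon$ and converge locally uniformly (Barles--Souganidis argument of Section \ref{CV}) to a viscosity solution of \eqref{Equa} of at most linear growth. Regularity is then cheap: applying Step 1 to $u(\cdot)$ and $u(\cdot+h)$---the latter solving \eqref{Equa} with $\varphi$ replaced by $\varphi(\cdot+h)$, which lies within $L|h|$ of $\varphi$---yields $\|u(\cdot+h)-u\|_\infty\le L|h|$, so $u$ is globally $L$-Lipschitz, in particular uniformly continuous; and $|u(x)|\le|u(0)|+L|x|$ reconfirms the linear growth. (Monotonicity and translation invariance of $u$, if wanted, follow the same way, mirroring Lemmas \ref{mono2} and \ref{tran2}.)

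\medskip
\noindent\emph{Main obstacle.} The only genuinely delicate point is the comparison principle on the \emph{unbounded} domain with merely linearly growing competitors: standard comparison theorems assume bounded or uniformly continuous data, so one must supply the right barrier. The quadratic $\Psi$ works precisely because $F$ is evaluated on the finite set $\{0,1\}^n$ (so $F(2I)=n$ is a constant) and because $\varphi$ has only linear growth; that $u_2+\eta\Psi$ remains a supersolution then rests on subadditivity of the convex, homogeneous operator $F$. Once the barrier is in hand, the Crandall--Ishii argument, Perron's method, and the Lipschitz estimate are all routine.
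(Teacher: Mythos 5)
Your proposal is correct in substance, but it takes a genuinely different route from the paper: the paper's entire proof consists of verifying the hypotheses of Theorem 5.1 in \cite{CIL} (namely that $\varphi\in UC(\RR^n)$ has linear growth and that $\mathcal{L}$ is degenerate elliptic, via Lemma \ref{dege}) and then citing that theorem, whereas you reconstruct the content of the cited result from scratch. Your reconstruction is sound: the proper zeroth-order term $+u$ lets you subtract the two viscosity inequalities directly after the Crandall--Ishii lemma, the monotonicity $F(X)\le F(Y)$ for $X\le Y$ is exactly Lemma \ref{dege}, and the observation that $F(2I)=n$ is constant (because $v$ ranges over the finite set $\{0,1\}^n$) makes the quadratic barrier work on the unbounded domain; the Lipschitz estimate via comparison of $u$ with $u(\cdot+h)\mp L|h|$ is also the standard and correct way to get uniform continuity. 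What your approach buys is transparency about exactly which structural features are used; what the paper's approach buys is brevity. One soft spot: in your existence option (a), Perron's method with barriers built from $\pm\Psi$ only confines the envelope between functions of \emph{quadratic} growth, so linear growth of the Perron solution does not follow immediately (the cleaner choice is $\tilde\varphi\pm C$ with $C$ large, which are genuine linear-growth super/subsolutions since $\tilde\varphi$ has bounded second derivatives and $|\tilde\varphi-\varphi|_\infty\le K$). Your option (b) avoids this entirely and is not circular, since your Step 1 comparison is proved independently of the scheme; so the overall argument stands.
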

\begin{proof}
 We check that the conditions of Theorem 5.1 in \cite{CIL} hold: $\varphi(x) \in UC(\mathbb{R}^n)$ is of at most linear growth. Moreover,
$$\mathcal{L}(u)=-\frac{1}{2}\max_{v\in\{0,1\}^n}\langle D^2u\cdot v, v\rangle$$
 is degenerate elliptic by the Lemma \ref{dege}. This establishes the conditions of Theorem 5.1; we now conclude from \cite{CIL} that the elliptic equation has a unique viscosity solution that grows at most linearly as $|x| \to \infty$.
\end{proof}


\section{Convergence to the Viscosity Solution\label{chap:five}}\label{CV}

In this section we show that the solutions of our discrete problems converge to the viscosity solution of our PDEs as $\varepsilon \to 0$.
In order to do so, we follow the setup of Barles and Souganidis \cite{BS}. The essence of the Barles-Souganidis convergence result is that if an approximation scheme is monotone, stable, and consistent, then solutions converge as  $\varepsilon \to 0$ to the viscosity solution of the associated PDE. This section provides the argument in a self-contained form as it applies to our setting.
Following standard notation, in the geometric stopping case we write
$$\mathcal{F}_\varepsilon[u(x)] := F(\varepsilon, x, u_0, u),$$
The induction defining the finite-horizon problem solution $w_\varepsilon$ can also be viewed as solving a `scheme' and this viewpoint will be useful for analyzing the limit as $\varepsilon \to 0$. We define the finite horizon approximation scheme as:
\begin{eqnarray}
\hspace{-1cm}  \tilde{\mathcal{F}}_\varepsilon[w(t,x)]&:=& \frac{w(t,x)-\min_{player}\max_{market}\mathbf{E}[w(t+\varepsilon^2, x+\varepsilon\Delta x)]}{\varepsilon^2}.
\end{eqnarray}
Following standard notation here too, we write this as
$$ \tilde{\mathcal{F}}_\varepsilon[w(x)] := F(\varepsilon,t, x, w_0, w).$$

\subsection{Monotonicity}

\begin{definition}
A time-independent scheme $\mathcal{F}_\varepsilon$ is monotone if
$$F(\varepsilon, x, u_0, u) \leq F(\varepsilon, x, u_0, v)$$
 whenever $u\geq v$ for all $\varepsilon \geq 0$, $x \in \RR^n$, $u_0 \in \RR$, and $u, v \in UC(\mathbb{R}^n)$.\\
A time-dependent scheme $\tilde{\mathcal{F}}_\varepsilon$ is monotone if
$$\tilde{F}_\varepsilon(\varepsilon, t, x, w_0, w) \leq \tilde{F}_\varepsilon(\varepsilon, t, x, w_0, v)$$
 whenever $w\geq v$ for all $\varepsilon \geq 0$,
$t<T$, $x \in \RR^n$, $w_0 \in \RR$, and $w, v \in UC(\mathbb{R}^n)$.
\end{definition}
\begin{lemma}
Our schemes $\mathcal{F}_\varepsilon$ and $\tilde{\mathcal{F}}_\varepsilon$ are monotone.
\end{lemma}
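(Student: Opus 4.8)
The plan is to reduce the monotonicity of both schemes to two elementary order‑preservation facts and then track signs carefully. First, for any fixed mixed strategies $\vec\alpha$ of the player and $p$ of the market, the expectation $\mathbf{E}_{\vec\alpha,p}[\,\cdot\,]$ is a convex combination with nonnegative weights summing to one, so $g\geq h$ pointwise implies $\mathbf{E}_{\vec\alpha,p}[g]\geq\mathbf{E}_{\vec\alpha,p}[h]$. Second, $\min$ and $\max$ both preserve inequalities, so if $G(\vec\alpha,p)\geq H(\vec\alpha,p)$ for every admissible pair then $\min_{player}\max_{market}G\geq\min_{player}\max_{market}H$. I would record these two facts at the outset and then apply them mechanically.

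For the stationary scheme I would write it in the form
\begin{equation*}
F(\varepsilon,x,u_0,u)=u_0-\varphi(x)-\frac{1-\varepsilon^2}{\varepsilon^2}\Big(\min_{player}\max_{market}\mathbf{E}\big[u(x+\varepsilon\Delta x)\big]-u_0\Big),
\end{equation*}
stressing that $u_0$ is the value at the base point $x$ (held fixed in the definition of monotonicity, exactly as in the definition of $F_\varepsilon^x$), while the function argument $u$ enters only through the neighbor values $u(x+\varepsilon\Delta x)$. Fixing $u_0$ and assuming $u\geq v$ on $\RR^n$, the two facts give $\min_{player}\max_{market}\mathbf{E}[u(x+\varepsilon\Delta x)]\geq\min_{player}\max_{market}\mathbf{E}[v(x+\varepsilon\Delta x)]$; since the relevant range is $0<\varepsilon<1$, the coefficient $-(1-\varepsilon^2)/\varepsilon^2$ is $\leq 0$, so multiplication flips the inequality, and the $u_0$ and $-\varphi(x)$ terms are untouched. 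This yields $F(\varepsilon,x,u_0,u)\leq F(\varepsilon,x,u_0,v)$. Alternatively, I would note this is merely a restatement of the degenerate ellipticity of $\mathcal{F}_\varepsilon$ already established above: raising the neighbor values lowers the differences $u_0-u_j$, in which $F_\varepsilon^x$ is nondecreasing.

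For the finite‑horizon scheme I would write
\begin{equation*}
F(\varepsilon,t,x,w_0,w)=\frac{1}{\varepsilon^2}\Big(w_0-\min_{player}\max_{market}\mathbf{E}\big[w(t+\varepsilon^2,x+\varepsilon\Delta x)\big]\Big),
\end{equation*}
where the function argument enters only through values at the strictly later time $t+\varepsilon^2$, so there is no overlap with the base point $(t,x)$ at all. Assuming $w\geq v$, the same two facts give the corresponding inequality for the $\min\max$ term, and negating and dividing by $\varepsilon^2>0$ gives $F(\varepsilon,t,x,w_0,w)\leq F(\varepsilon,t,x,w_0,v)$.

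I do not expect a genuine obstacle here; the argument is routine. The only points that need care are the sign of the coefficient $-(1-\varepsilon^2)/\varepsilon^2$ in the stationary scheme (hence keeping $\varepsilon<1$), and, in that same scheme, being careful to hold the base‑point value $u_0$ fixed and let only the neighbor values vary when applying the definition of monotonicity.
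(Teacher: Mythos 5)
Your proof is correct and follows essentially the same route as the paper's: hold the base-point value $u_0$ (resp.\ $w_0$) fixed, use that the expectation and the $\min\max$ preserve the pointwise inequality $u\geq v$, and observe that the nonpositive coefficient in front of the $\min\max$ term reverses the inequality as required. The only difference is that you spell out the order-preservation of $\min$/$\max$ and the sign of $-(1-\varepsilon^2)/\varepsilon^2$ explicitly, which the paper leaves implicit.
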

\begin{proof}
Firstly, let us prove the statement for the time-dependent scheme:
\begin{eqnarray*}
 \tilde{F}(\varepsilon, t, x, w_0, w)&:=& \frac{w_0 -\min_{player}\max_{market}\mathbf{E}[w(t+\varepsilon^2, x+\varepsilon\Delta x)]}{\varepsilon^2} \\
&\leq&\frac{w_0 -\min_{player}\max_{market}\mathbf{E}[v(t+\varepsilon^2, x+\varepsilon\Delta x)]}{\varepsilon^2} \\
& = & \tilde{F}(\varepsilon, t, x, w_0, v)
\end{eqnarray*}
The inequality follows from applying an expected value to $ w(t+\varepsilon^2, x+\varepsilon\Delta x) \geq v(t+\varepsilon^2, x+\varepsilon\Delta x)$ and reversing signs.

Next, we prove the statement for the stationary scheme:
\begin{eqnarray*}
 F(\varepsilon, x, u_0, u)&:=& u_0 - \varphi(x) -\frac{1-\varepsilon^2}{\varepsilon^2}\min_{player}\max_{market}\mathbf{E}[u(x+\varepsilon\Delta x)-u_0] \\
&\leq&u_0 - \varphi(x) -\frac{1-\varepsilon^2}{\varepsilon^2}\min_{market}\max_{market}\mathbf{E}[v(x+\varepsilon\Delta x)-u_0] \\
& = & F(\varepsilon, x, u_0, v)
\end{eqnarray*}
 The inequality follows from applying the expected value to $ u(x+\varepsilon\Delta x) -u_0\geq v(x+\varepsilon\Delta x) -u_0$.
\end{proof}

\subsection{Main Result}

As already mentioned, \cite{BS} shows that if a numerical scheme is stable, monotone, and consistent then its solutions converge to those of the associated PDE. In this paper stability is provided by Theorems \ref{Euler} and \ref{Th5}, which proves uniform bounds on $u_\varepsilon$ and $w_\varepsilon$ (independent of $\varepsilon$). The heuristic argument in Section 2 provides the essence of the argument for consistency  (taking into account that $u_\varepsilon$ and $w_\varepsilon$ are increasing in each $x_i$ and satisfy the "translation property").  A more rigorous proof of consistency will be part of the proof of the following convergence theorem.

\begin{theorem}\label{Main}
The unique solutions $u_\varepsilon$ and $w_\varepsilon$ of $\mathcal{F}_\varepsilon$ and $\tilde{\mathcal{F}}_\varepsilon$ converge to the unique solutions of
 (\ref{Equa}) and  (\ref{parE}), respectively.
\end{theorem}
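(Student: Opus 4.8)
The plan is to invoke the Barles--Souganidis paradigm \cite{BS}: since our schemes have already been shown to be monotone (previous subsection) and stable (the $\varepsilon$-uniform bounds of Theorems~\ref{Euler} and \ref{Th5}), what remains is to establish \emph{consistency} and then conclude via the comparison principles of Section~\ref{EU}. Concretely, I would introduce the half-relaxed limits
$$\bar u(x) = \limsup_{\varepsilon\to0,\ y\to x} u_\varepsilon(y), \qquad \underline u(x) = \liminf_{\varepsilon\to0,\ y\to x} u_\varepsilon(y),$$
and their time-dependent analogues $\bar w,\underline w$. The estimate $|u_\varepsilon-\tilde\varphi|\le M$ guarantees these are finite, with at most linear growth; $\bar u$ is upper semicontinuous, $\underline u$ is lower semicontinuous, and both inherit from $u_\varepsilon$ the monotonicity in each $x_i$ and the translation property $f(x+c\vec 1)=f(x)+c$ (these pass trivially through the relaxed limits). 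The main step is to prove that $\bar u$ is a viscosity subsolution and $\underline u$ a viscosity supersolution of \eqref{Equa} (and likewise $\bar w,\underline w$ for \eqref{parE}, including the final-time inequalities, which follow from $w_\varepsilon(T,\cdot)=\varphi$ and continuity of $\varphi$). Granting this, the comparison principles (Theorems~\ref{comp} and \ref{comp-elliptic}) yield $\bar u\le\underline u$; since trivially $\underline u\le\bar u$, the two coincide, so $u:=\bar u=\underline u$ is continuous, is the unique viscosity solution, and $u_\varepsilon\to u$ locally uniformly. The finite-horizon case is handled identically.

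For consistency I would argue as follows, taking for concreteness the subsolution property of $\bar u$ in the elliptic case. Fix smooth $\psi$ with $\bar u-\psi$ attaining a strict local maximum at $x_0$; the standard relaxed-limit construction produces $\varepsilon_j\to0$ and interior local maximizers $x_j\to x_0$ of $u_{\varepsilon_j}-\psi$ with $u_{\varepsilon_j}(x_j)\to\bar u(x_0)$. \textbf{The crucial point} is that, since $u_{\varepsilon_j}$ is monotone nondecreasing in each variable and satisfies the translation property, comparing the local extremality of $u_{\varepsilon_j}-\psi$ at $x_j$ with these two structural properties forces $\partial_i\psi(x_j)\ge0$ for every $i$ and $\sum_i\partial_i\psi(x_j)=1$ --- exactly the conditions under which the heuristic analysis of Subsection~\ref{PDEGS} applies. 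One then inserts $u_{\varepsilon_j}(x_j+\varepsilon_j\Delta x)\le\psi(x_j+\varepsilon_j\Delta x)+(u_{\varepsilon_j}(x_j)-\psi(x_j))$ into $\mathcal F_{\varepsilon_j}[u_{\varepsilon_j}](x_j)=0$, Taylor-expands $\psi$, and \emph{in the resulting $\min_\alpha\max_p$ selects the player strategy $\alpha_i=\partial_i\psi(x_j)$} --- a legitimate probability vector precisely because of the two inherited properties. As in the ``market indifference'' computation of Subsection~\ref{PDEGS}, this annihilates the $O(\varepsilon_j)$ term identically, so after dividing by $\varepsilon_j^2$ only the second-order term survives, and its maximum over balanced market strategies equals $\tfrac12\max_{v\in\{0,1\}^n}\langle D^2\psi(x_j)v,v\rangle$ up to $O(\varepsilon_j)$, using the identity \eqref{Sq}. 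Letting $j\to\infty$ gives $\bar u(x_0)-\varphi(x_0)-\tfrac12\max_{v}\langle D^2\psi(x_0)v,v\rangle\le0$. The supersolution property of $\underline u$ is symmetric, except that now the minimizing player strategy is only $O(\varepsilon_j)$-close to $\nabla\psi(x_j)$ (otherwise the market exploits the first-order term to force the value to exceed $\varepsilon_j^2$ in order), which still suffices to identify the limit, and the market realizes the second-order term via the balanced distribution placing mass $\tfrac12$ on a maximizing complementary pair $\{v^*,\vec 1-v^*\}$. To use \eqref{Sq} one should first replace $\psi$ by a test function with $\langle\nabla\psi,\vec 1\rangle\equiv1$ near $x_0$, a harmless reduction afforded by the translation invariance of the relaxed limit.

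\textbf{The main obstacle} is exactly this treatment of the first-order term in the Taylor expansion of the scheme: after normalization it carries a factor $\varepsilon^{-1}$, so consistency fails outright unless the term vanishes, and making this rigorous --- rather than merely heuristic as in Section~\ref{DPP} --- is the heart of the argument. The resolution relies entirely on the structural properties of the discrete solutions (monotonicity and the translation property), which propagate to the relaxed limits, constrain the admissible test functions at contact points, and thereby pin the player's optimal strategy to the gradient of the test function; Lemma~\ref{balance} plays the parallel role on the market's side. A secondary technical point is the reduction to gradient-normalized test functions needed for the second-order term to collapse to $\tfrac12\max_v\langle D^2\psi\cdot v,v\rangle$ through \eqref{Sq}. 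Once consistency is established, convergence of both $u_\varepsilon$ and $w_\varepsilon$ follows from the comparison principles exactly as in the Barles--Souganidis framework; the only differences in the finite-horizon case are that the $\varepsilon^2$-order term of the expansion additionally contains $w_t$, producing the parabolic operator, and that the final-time inequalities require a separate (straightforward) verification.
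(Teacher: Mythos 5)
Your proposal follows essentially the same route as the paper: Barles--Souganidis with half-relaxed limits, stability from Theorems~\ref{Euler} and \ref{Th5}, consistency established by exploiting the inherited monotonicity and translation property to pin the player's strategy to $\nabla\psi$ at the contact point (after normalizing the test function so that $\langle\nabla\psi,\vec 1\rangle\equiv 1$, which the paper implements via the projection orthogonal to $\vec 1$ and Theorem 3.2 of \cite{CIL}), and then the comparison principles of Section~\ref{EU}. The only step you understate is the final-time condition $\overline{w}(T,\cdot)\le\varphi$, which does not follow merely from $w_\varepsilon(T,\cdot)=\varphi$ and continuity (the relaxed limit involves $\tau<T$) and is handled in the paper by a barrier argument of the form $|\tilde x-x_0|^2/\delta+(T-t)/\mu$.
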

\begin{proof}
The first part of the proof follows \cite{BS} and \cite{CIL}. We do the proof in the time dependent case (the stationary case is identical). We define $\overline{w}$, $\underline{w}$ by
\begin{equation}
\overline{w}(t,x)= \limsup_{\substack{\varepsilon \to 0,~ \tau \to t\\ y \to x}}w_\varepsilon(\tau, y)
\end{equation}
and
\begin{equation}
\underline{w}(t,x)= \liminf_{\substack{\varepsilon \to 0,~ \tau \to t\\ y \to x}}w_\varepsilon(\tau, y).
\end{equation}
The functions $\overline{w}$ and $\underline{w}$ have the translation property and are monotone in each variable because the sequences $w_\varepsilon$ have those properties.
We prove that  $\overline{w}(t,x)$ is a sub-solution  (the proof that $\underline{w}(t,x)$ is a supersolution is completely parallel).

Consider  $\xi \in C^\infty$, which touches  $\overline{w}(t,x)$ at $(t_0,x_0)$ - a local maximum of $\overline{w}(t,x)-\xi(t,x)$; we also assume $t_0 < T$ (the other case $t_0=T$ is presented towards the end of this proof).  To make notation simpler we can modify $\xi$,  (without loss of generality) so that (i) $\overline{w}-\xi$ has a  maximum at $(t_0,x_0)$ and (ii) $\overline{w}(t_0,x_0)-\xi(t_0,x_0)=0$.

We change coordinates so that $\tilde{x}=\pi(x)$ is the projection of $x$ orthogonal to $(1,1,...,1)$ whereas $z:=\frac{1}{n}(x_1+...+x_n)$, is the projection of $x$ onto $(1,1,...,1)$. Since $\overline{w}$  has the translation property, there is a unique function $\tilde{w}(t,\tilde{x})$ defined for $\tilde{x} \in \{ x_1+...+x_n\}=0$ such that $\overline{w}(t,x)=\tilde{w}(t,\pi(x)) +z$.

We fix a $\delta$ and employ Theorem 3.2 from \cite{CIL}. We obtain a sequence of functions $\tilde{\psi}_j(t,\tilde{x})$ with the following properties:
\begin{enumerate}
\item $\tilde{\psi}_j(t,\tilde{x})$ touches $\tilde{w}$ at $(t'_j, x'_j)$ near $(t_0, \pi(x_0))$, so $\tilde{w}-\tilde{\psi}_j$ has a strict local maximum at  $(t'_j, x'_j)$ and without loss of generality $\tilde{w}(t'_j, x'_j)-\tilde{\psi}_j(t'_j, x'_j)=0$.
\item The first derivatives of $\tilde{\psi}_j$ at $(t'_j, x'_j)$ converge (as $j \to \infty$) to the first derivatives of $\xi$ at $(t_0, \pi(x_0),0)$.
\item The second derivative matrix $X_j$ of $\tilde{\psi}_j(t,\tilde{x})$  at $(t'_j, x'_j)$ (with respect to spatial variables $\tilde{x}$) converges to the matrix $X$, which satisfies
\begin{equation}\label{mpsi}
\tilde{X}=
\begin{pmatrix}
X & 0\\
0 & 0
\end{pmatrix}
\leq A +C\delta.
\end{equation}
where $A=D^2\xi(t_0, x_0)$ is the Hessian of $\xi$ at $(t_0,x_0)$ in the variables $z, \tilde{x}$ and $C$ is a constant depending on $\xi$ only.
\end{enumerate}
We extend $\tilde{\psi}_j$ to $\psi_j(t,x) = \tilde{\psi}_j(t,\pi(x))+ z$, and observe that $\psi_j(t,x)$ has the same second derivatives as $\tilde{\psi}_j$ with respect to $t, \tilde{x}$, as well as $\partial_{zz}\psi_j=0$. We observe that
$$(\partial_1 + ... + \partial_n)\psi_j = 1,$$
by construction, regardless of location. Therefore, we can differentiate the above expression, obtaining, for every $k$,
\begin{equation}\label{der}
\partial_k(\partial_1 + ... + \partial_n)\psi_j = 0.
\end{equation}
We will use this relation in the argument below.

We argue similarly to \cite{BS}. Consider
$$\overline{w} -\psi_j = \tilde{w}+z - (\tilde{\psi}_j+z)= \tilde{w}-\tilde{\psi}_j,$$
which implies that $\psi_j$ touches $\overline{w}$ whenever $\tilde{\psi}_j$ touches $\tilde{w}$. In particular, $\psi_j$ touches $\overline{w}$  at $(t'_j, x'_j,z)$ for any $z$.
Since $\tilde{w}-\tilde{\psi}_j$ has a local max at $(t'_j, x'_j)$ there exists a ball $B(t'_j, x'_j,r)$ with radius $r$, so that $\tilde{w}-\tilde{\psi}_j<0$ on the ball. Moreover, because we want the local maximum to be a global maximum, we can change $\tilde{\psi}_j(t,x)$ so that
 $$\tilde{\psi}_j(t,\tilde{x}) \geq (\tilde{\varphi}(\tilde{x},0)+C(T-t+1)) \geq \sup_{\varepsilon}w_\varepsilon$$
outside the ball  $B(t'_j,x'_j, r).$ The second inequality is a consequence of theorem \ref{Th5}. The function $
\tilde{\varphi} = \varphi*\eta$ is the smooth version of $\varphi$, introduced in subsection \ref{smooth}.
After the adjustment of $\tilde{\psi}_j$ we obtain that $(t'_j, x'_j)$ is a global max of $\tilde{w}-\tilde{\psi}_j$.

Since $\overline{w}(t,x)= \limsup_{\substack{\varepsilon \to 0,~ \tau \to t\\ y \to x}}w_\varepsilon(\tau, y)$ and $w_\varepsilon$ has the translation property (i.e. $w_\varepsilon(t,x)= \tilde{w}_\varepsilon(t,\pi(x)) +z$), we can obtain
sequences $\varepsilon_n$ and $(\tau_n, y_n)$  such that $\pi(y_n)=0$ and
\begin{enumerate}
\item  $\tilde{w}_{\varepsilon_n}- \tilde{\psi}_j$  achieves its global max at $(\tau_n, y_n)$
\item $(\tau_n, y_n) \to (t'_j, x'_j)$
\item $w_{\varepsilon_n}(\tau_n,y_n) \to \tilde{w}(t'_j, x'_j)$.
\end{enumerate}
Denote $\theta_n = \tilde{w}_{\varepsilon_n}(\tau_n,y_n)-\tilde{\psi}_j(\tau_n,y_n).$
Since we have global maxima, we obtain
$$w_{\varepsilon_n}(\tau,y)-\psi_j(\tau,y) \leq \theta_n, $$
or equivalently
$$w_{\varepsilon_n}(\tau,y) \leq \psi_j(\tau,y) +\theta_n. $$

We are prepared to use the properties of the scheme:
\begin{eqnarray*}
 0 &=&  \tilde{F}(\varepsilon_n, \tau_n, y_n, w_{\varepsilon_n}(\tau_n,y_n), w_{\varepsilon_n})\\
 &=&  \tilde{F}(\varepsilon_n, \tau_n, y_n, \psi_j(\tau_n,y_n)+\theta_n, w_{\varepsilon_n})\\
&\geq&  \tilde{F}(\varepsilon_n, \tau_n, y_n, \psi_j(\tau_n,y_n)+\theta_n, \psi_j +\theta_n).
\end{eqnarray*}
The equalities follows from $w_{\varepsilon_n}$ being a solution to the scheme, while the inequality follows from monotonicity with respect to the larger function  $  \psi_j(\tau,y) +\theta_n$.
Now we take limits in order to apply consistency of the scheme:
\begin{eqnarray*}
 0 &\geq& \lim_j\varlimsup_n \tilde{F}(\varepsilon_n, \tau_n, y_n, \psi_j(\tau_n,y_n)+\theta_n, \psi_j +\theta_n ) \\
&=& \lim_j\varlimsup_n \frac{\psi_j(\tau_n,y_n)-\min_{player}\max_{market}\mathbf{E}[\psi_j(\tau_n+\varepsilon_n^2, y_n+\varepsilon_n\Delta y_n)]}{\varepsilon_n^2} \\
&= & \lim_j\varlimsup_n -\frac{1}{\varepsilon_n^2}\min_{player} \max_{market} \mathbf{E}[\varepsilon_n\langle\nabla \psi_j, \Delta y \rangle+ \varepsilon_n^2(\psi_{jt} +\frac{1}{2} \langle D^2\psi_j \cdot \Delta y, \Delta y\rangle) + o(\varepsilon_n^2)]
\end{eqnarray*}

We begin with two observations. First, $o(\varepsilon_n^2)$ divided by the $\varepsilon_n^2$ denominator is insignificant as it vanishes in the limit; thus the  $o(\varepsilon_n^2)$ can (and will) be ignored in what follows. Our second observation is that the $\varepsilon_n^2$ term
$$ \varepsilon_n^2(\psi_{jt} +\frac{1}{2} \langle D^2\psi_j \cdot \Delta y, \Delta y\rangle)$$
can be simplified using translation invariance; in fact it can be rewritten into its PDE form in an entirely parallel fashion to the one used in the heuristic derivation found in subsection 2.3. In particular, its value depends only on the market's choices (not the player's choices).

Observe that the $\min$ over all the player's choices is less than or equal to the expression with a particular choice of the player. Thus
$$\min_{player} \max_{market}\frac{ \mathbf{E}[\varepsilon_n\langle\nabla \psi_j, \Delta y \rangle+ \varepsilon_n^2(\psi_{jt} +\frac{1}{2} \langle D^2\psi_j \cdot \Delta y, \Delta y\rangle) ]}{\varepsilon^2_n}$$
is less than or equal to the value of
\begin{eqnarray} \label{expect}
\max_{market}\frac{ \mathbf{E}[\varepsilon_n\langle\nabla \psi_j, \Delta y \rangle+ \varepsilon_n^2(\psi_{jt} +\frac{1}{2} \langle D^2\psi_j \cdot \Delta y, \Delta y\rangle) ]}{\varepsilon^2_n}
\end{eqnarray}
when the player chooses the particular strategy
$$\alpha_i =\partial_i\psi_j,~~~i=1,...,n.$$
Note that we use that
$\sum_{k=1}^n \partial_k \psi =1 $ and $\partial_i \psi_j \geq0$ for $k=1,...,n$.
The equality comes from $\psi_j$ having the translation property; the inequalities $\partial_i \psi_j \geq0$ follow by a standard argument from the facts that $w_{\varepsilon_n}$ is nondecreasing in $x_i$, and
that $w_{\varepsilon_n}  - \psi_j$ has a local maximum at the point around which we perform the Taylor expansion.

The expression \ref{expect} seems to have a term proportional to $\varepsilon_n^{-1}$, i.e.
 $$ \varepsilon_n^{-2}\mathbf{E}[\varepsilon_n\langle\nabla \psi_j, \Delta y \rangle].$$
However, for the particular choice of values for $\alpha$ this term vanishes as shown in subsection \ref{PDEGS}:
$$\mathbf{E}_{\vec{\alpha}, p}[\langle\nabla u , \Delta x \rangle] =  \sum_{i=1}^{n}[  \partial_i u - \alpha_i ]\mathbf{E}_{p}v_i.$$
 Thus (\ref{expect})  is actually equal to :
$$\max_{market}(\psi_{jt} +\frac{1}{2} \langle D^2\psi_j \cdot \Delta y, \Delta y\rangle),$$
which using the arguments in subection \ref{el} equals
$$ \psi_{jt}+\frac{1}{2}\max_{v\in\{0,1\}^n} \langle D^2\psi_j\cdot v, v\rangle.$$
We conclude that:

\begin{eqnarray*}
&& \lim_j\varlimsup_n -\frac{1}{\varepsilon_n^2}\min_{player} \max_{market} \mathbf{E}[\varepsilon_n\langle\nabla \psi_j, \Delta y \rangle+ \varepsilon_n^2(\psi_{jt} +\frac{1}{2} \langle D^2\psi_j \cdot \Delta y, \Delta y\rangle) + o(\varepsilon_n^2)]\\
&\geq& \lim_j( -\psi_{jt} (t'_j,x'_j,0) -\frac{1}{2}\max_{v\in\{0,1\}^n}\langle D^2\psi_j(t'_j,x'_j)\cdot v, v\rangle)\\
&=&-\psi_t (t_0,x_0) -\frac{1}{2}\max_{v\in\{0,1\}^n}\langle \tilde{X}\cdot v, v\rangle\\
& \geq&-\xi_t ({t_0},{x_0}) -\frac{1}{2}\max_{v\in\{0,1\}^n}\langle D^2\xi(t_0,x_0)\cdot v, v\rangle -K\delta.
\end{eqnarray*}

The equalities above essentially follow the heuristic argument in section 2:  applying the definition, canceling terms, and Taylor expansion.
The last inequality follows, because $\lim_j\psi(t_0, \tilde{x_0},0)$ and $\xi(t_0,x_0)$ have matching time derivatives by construction, and because the matrix comparison in \ref{mpsi} holds.
In the expression above we may chose $\delta$ as small as we like; sending it to 0 completes the proof that $\overline{w}$ is a supersolution for $t<T$.

Finally, let us consider the final time $t_0=T$ for the time-dependent case. We need to show that  $\overline{w}(T,x)\leq \varphi(T,x)$. In fact, we will prove that $\overline{w}(T,x)= \varphi(T,x)$. Because of the translation property, we can examine
points $x_0$ such that $\sum_j x_{0,j}=0$, and a barrier function $\tilde{\psi}$, such that
$$\tilde{\psi} (t,\tilde{x}) = \frac{|\tilde{x}-x_0|^2}{\delta}+\frac{T-t}{\mu}$$
for  $\sum_j \tilde{x}_j=0$. Just as before, we extend $\tilde{\psi}$ and $\tilde{w}$ so that
 $$\psi(t,x) = \tilde{\psi}(t,\pi(x))+ z$$
and
$$ \overline{w}(t,x)=\tilde{w}(t, \pi(x))+z.$$
Since
$$\overline{w} -\psi = \tilde{w}+z - (\tilde{\psi}+z)= \tilde{w}-\tilde{\psi},$$
we can focus on maximizing $ \tilde{w}-\tilde{\psi}$ (and not $\overline{w} -\psi$). We consider the half-space $(t\leq T, \sum_j x_j=0)$ and let
$(\tau_{\delta,\mu}, x_{\delta,\mu})$ be the point where maximum of $ \tilde{w}-\tilde{\psi}$ attains its max. We see that
$$(\tau_{\delta,\mu}, x_{\delta,\mu}) \to  (T,x_0)  ~~~\mbox{as}~~ ~\delta,\mu \to 0. $$
Moreover,
\begin{equation}\label{ineq2}
\tilde{w}(\tau_{\delta,\mu}, x_{\delta,\mu})\geq \tilde{w}(\tau_{\delta,\mu}, x_{\delta,\mu})-\tilde{\psi}(\tau_{\delta,\mu}, x_{\delta,\mu})\geq \tilde{w}(T,x_0)
.\end{equation}
Because of the above and $\tilde{w} =\limsup \tilde{w}_\varepsilon$, we see that
\begin{equation}\label{conv}
\tilde{w}(\tau_{\delta,\mu}, x_{\delta,\mu}) \to \tilde{w}(T, x_0)~~~\mbox{as}~~ ~\delta,\mu \to 0.
\end{equation}
Consider the maximum point $\tau_{\delta,\mu}, x_{\delta,\mu}$. If $\tau_{\delta,\mu}<T$, then we repeat the argument presented above for the interior case to get
\begin{equation}\label{tpde}
0=-\psi_t (T,x_0) -\frac{1}{2}\max_{v\in\{0,1\}^n}\langle D^2\psi(T,x_0)\cdot v, v\rangle.
\end{equation}
We restrict our attention to choices of $\delta>0, \mu>0$ so that $\delta - n\mu>0$. Then,
\begin{equation}\label{no}
0=-\psi_t (\tilde{t_0},\tilde{x_0}) -\frac{1}{2}\max_{v\in\{0,1\}^n}\langle D^2\psi(\tilde{t_0},\tilde{x_0})\cdot v, v\rangle\geq \frac{1}{\mu}-\frac{n}{\delta}=\frac{\delta-n\mu}{\mu \delta},
\end{equation}
which is a contradiction. Therefore, if $(\delta, \mu) \to 0$ with $0< \delta -\mu n$, then $\tau_{\delta,\mu} = T$ when $\delta$ and $\mu$ are sufficiently small.
We have leftover to prove that $\overline{w}(T,x_0)=\varphi(x_0)$; in order to do that, by \ref{conv} it is enough to show that $\tilde{w}(T,x_{\delta, \mu})=\varphi(x_{\delta,\mu})$, when $\delta,\mu$ - sufficiently small.
The proof is parallel to the one of the interior case. We use that
\begin{equation}
\overline{w}(T,x)= \limsup_{\substack{\varepsilon \to 0,~ \tau \to T\\ y \to x}}w_\varepsilon(\tau, y)
\end{equation}
and that $w_\varepsilon$ has the translation property to obtain sequences $\varepsilon_n$ and $(\tau_n, y_n)$, for which $\pi(y_n)=0$ and
\begin{itemize}
\item  $\tilde{w}_{\varepsilon_n}$ is maximized on $t \leq T$ at $(\tau_{n}, y_{n})$
\item  $(\tau_{n}, y_{n}) \to  (T,x_{\delta,\mu})  $
\item  $\tilde{w}_{\varepsilon_n} \to \tilde{w}(T,x_{\delta,\mu}).$
\end{itemize}
If $\tau_n<T$ for infinitely many $\tau_n$, we obtain equation (\ref{no}), a contradiction.
Hence, for all large $n$ we obtain $\tau_n=T$, which implies $\tilde{w}_{\varepsilon_n}=\varphi(y_n)$. Combining with the fact that $\varphi$ is continuous, we deduce that $\tilde{w}(T,  x_{\delta,\mu})=\varphi( x_{\delta,\mu})$. This concludes the proof that $\overline{w}$ is a subsolution.

As already mentioned, the proof that $\underline{w}$ is a supersolution is parallel. The main difference is working with the optimal choice for the market instead of the player.

We would like to show that  $\overline{w}=\underline{w}$ is the unique viscosity solution to the PDE (\ref{parE}).
One inequality comes from comparison principle: since $\overline{w}(t,x)$ is a upper semicontinuous sub-solution, as we just proved, and $\underline{w}(t,x)$ is a lower semicontunous super-solution, then by comparison principle (Theorem \ref{comp}) we obtain the desired inequality $\overline{w}(t,x) \leq \underline{w}(t,x)$.
The other inequality follows by the definition of $\limsup$ and $\liminf$. Therefore  $\overline{w}(t,x) = \underline{w}(t,x)=w$, which is what we wanted to show.
\end{proof}

\subsection{Consequences of the main result}
We proved that $\lim w_\varepsilon=w$ and $\lim u_\varepsilon=u$.  As a result a lot of the properties of the solutions to the discrete problem are inherited.

\begin{lemma}
The solution $w$ of the time-dependent problem \eqref{parE}  is symmetric, monotone, and translation invariant, ie if  $\tilde{x}_1\geq x_1$ and $c$ - any constant, then
\begin{eqnarray}
w(t,x_1,x_2,...,x_n) = w(t,x_2,x_1,...,x_n),\\
w(t,\tilde{x}_1,x_2,...,x_n) \geq w(t,x_1,x_2,...,x_n),\\
w(t,x_1+c,x_2+c,...,x_n+c) = w(t,x_1,x_2,...,x_n)+c.
\end{eqnarray}
\end{lemma}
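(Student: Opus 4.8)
The plan is to deduce all three properties of $w$ directly from the corresponding properties of the discrete solutions $w_\varepsilon$, using the convergence $w_\varepsilon \to w$ established in Theorem \ref{Main}. The point is that symmetry, monotonicity, and translation invariance are each expressed as an equality or inequality between values of the function at two different points, and such relations are stable under pointwise limits.

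First I would recall the two ingredients. By Theorem \ref{Th5}, for every $\varepsilon > 0$ the function $w_\varepsilon$ is symmetric in the spatial variables, nondecreasing in each $x_j$, and translation invariant. By (the proof of) Theorem \ref{Main}, the half-relaxed limits $\overline{w}$ and $\underline{w}$ of the family $\{w_\varepsilon\}$ coincide with the unique viscosity solution $w$; in particular $w_\varepsilon(\tau,y) \to w(t,x)$ whenever $\varepsilon \to 0$, $\tau \to t$, and $y \to x$. (Since $w$ is continuous and the $w_\varepsilon$ satisfy the uniform bounds of Theorem \ref{Th5}, the convergence is in fact locally uniform, but plain pointwise convergence is all that is needed below.)

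Next, fix $(t,x)$ and pass each relation to the limit. For symmetry, write $w_\varepsilon(t,x_1,x_2,\ldots,x_n) = w_\varepsilon(t,x_2,x_1,\ldots,x_n)$ and send $\varepsilon \to 0$ on both sides to get $w(t,x_1,x_2,\ldots,x_n) = w(t,x_2,x_1,\ldots,x_n)$; the identical argument applies to any transposition of two coordinates, and since transpositions generate the symmetric group, $w$ is fully symmetric. For monotonicity, suppose $\tilde{x}_1 \geq x_1$; then $w_\varepsilon(t,\tilde{x}_1,x_2,\ldots,x_n) \geq w_\varepsilon(t,x_1,x_2,\ldots,x_n)$ for every $\varepsilon$, and the non-strict inequality survives the limit $\varepsilon \to 0$; the other coordinates are handled the same way. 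For translation invariance, from $w_\varepsilon(t,x_1+c,\ldots,x_n+c) = w_\varepsilon(t,x_1,\ldots,x_n)+c$ we again let $\varepsilon \to 0$ and obtain the same identity for $w$.

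There is no genuine obstacle here: the only thing that must be checked is that the convergence provided by Theorem \ref{Main} is strong enough for values at fixed points to converge, and this is immediate from the identity $\overline{w} = \underline{w} = w$ together with the definitions of the two half-relaxed limits. In fact, as noted in the proof of Theorem \ref{Main}, the functions $\overline{w}$ and $\underline{w}$ were already observed to be monotone and translation invariant for precisely this reason, so the present lemma merely records that observation and adds the (equally routine) symmetry statement.
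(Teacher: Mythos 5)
Your proposal is correct and follows exactly the paper's own route: invoke the symmetry, monotonicity, and translation invariance of $w_\varepsilon$ from Theorem \ref{Th5}, use the convergence $w_\varepsilon \to w$ from Theorem \ref{Main}, and pass each pointwise relation through the limit. The extra detail you supply (transpositions generating the symmetric group, stability of non-strict inequalities under limits) is a harmless elaboration of what the paper states in one line.
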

\begin{proof}
We observe that $w_\varepsilon \to w$ as $\varepsilon \to 0$ by Theorem \ref{Main}, so we pass the equality through the limit, obtaining in the end the desired identities.
\end{proof}

\begin{lemma}\label{GEc}
The solution $u$ of the elliptic PDE (\ref{Equa})  is symmetric, monotone, and translation invariant, ie if  $\tilde{x}_1\geq x_1$ and $c$ - any constant, then
\begin{eqnarray}
u(x_1,x_2,...,x_n) = u(x_2,x_1,...,x_n).\\
u(\tilde{x}_1,x_2,...,x_n) \geq u(x_1,x_2,...,x_n)\\
u(x_1+c,x_2+c,...,x_n+c) = u(x_1,x_2,...,x_n)+c.
\end{eqnarray}
\end{lemma}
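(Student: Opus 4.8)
The plan is to obtain these three identities for $u$ by passing to the limit $\varepsilon \to 0$ in the corresponding properties of the discrete solutions $u_\varepsilon$, exactly as was just done for the time-dependent solution $w$ in the previous lemma. By Theorem \ref{Main}, the unique solutions $u_\varepsilon$ of the scheme $\mathcal{F}_\varepsilon$ converge to the unique viscosity solution $u$ of \eqref{Equa}; in particular $u_\varepsilon(x) \to u(x)$ for each fixed $x \in \RR^n$, which is all the convergence we will need here.

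First I would recall that each $u_\varepsilon$ already has the three properties in question. Symmetry of $u_\varepsilon$ is Lemma \ref{sym1}, monotonicity in each variable is Lemma \ref{mono2}, and the translation property is Lemma \ref{tran2} (these were also collected in the statement of Theorem \ref{Euler}). Hence, for every $\varepsilon > 0$, every $c \in \RR$, and all $x$ with $\tilde x_1 \geq x_1$, one has the pointwise relations $u_\varepsilon(x_1,x_2,\ldots,x_n) = u_\varepsilon(x_2,x_1,\ldots,x_n)$, $\ u_\varepsilon(x_1+c,\ldots,x_n+c) = u_\varepsilon(x_1,\ldots,x_n)+c$, and $\ u_\varepsilon(\tilde x_1,x_2,\ldots,x_n) \geq u_\varepsilon(x_1,x_2,\ldots,x_n)$. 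Then I would simply let $\varepsilon \to 0$: pointwise convergence preserves equalities and non-strict inequalities, so each of these relations is inherited by the limit $u$, which is precisely the asserted symmetry, monotonicity, and translation invariance.

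There is no real obstacle here beyond bookkeeping; the only point to verify is that the convergence furnished by Theorem \ref{Main} is at least pointwise (it is), so that passing to the limit in equalities and inequalities is legitimate. As an alternative route one could argue directly at the PDE level — symmetry and translation covariance of both the operator $\tfrac12\max_{v\in\{0,1\}^n}\langle D^2u\cdot v,v\rangle$ and of $\varphi$, combined with the uniqueness statement of Theorem \ref{comp-elliptic}, force $u$ to inherit these symmetries — but the limit argument above is the shortest path given what has already been proved.
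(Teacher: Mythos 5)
Your argument is correct and is essentially identical to the paper's own proof: both invoke the symmetry, monotonicity, and translation properties of the discrete solutions $u_\varepsilon$ and pass these relations through the limit $u_\varepsilon \to u$ guaranteed by Theorem \ref{Main}. The extra bookkeeping you provide (and the alternative uniqueness-based route you mention) is consistent with, but not needed beyond, what the paper does.
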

\begin{proof}
We observe that $u_\varepsilon \to u$ as $\varepsilon \to 0$ by theorem \ref{Main}, so we pass the equality through the limit, obtaining in the end the desired identities.
\end{proof}


\section{Exact Solution\label{chap:six}}\label{ES}

It is natural to ask how the PDE might be used. We offer two simple applications in this section: an exact solution of the geometric stopping case for $n=3$ experts and a demonstration that the associated argument does not generalise straightforwardly to $n=4$ experts. (There is now an explicit solution for the geometric
stopping case with $n=4$ experts \cite{BEZ}. Its derivation makes use of our PDE.)

\subsection{The Geometric Stopping Case with $n=3$}\label{Explicit}

The following result is a continuous analogue of one in \cite{GPS}.

\begin{theorem}
The solution of our PDE \eqref{Equa} in the geometric stopping case for $n=3$ experts and $\varphi =\max\{x_1,x_2, x_3\}$ is symmetric with respect to $x_1, x_2, x_3$, and in the quadrant where $x_1 \geq x_2 \geq x_3$, its formula is
\begin{equation} \label{ExS}
u(x) = x_1 + \frac{1}{2\sqrt{2}}e^{\sqrt{2}(x_2-x_1)} + \frac{1}{6\sqrt{2}}e^{\sqrt{2}(2x_3-x_2-x_1)}.
\end{equation}
\end{theorem}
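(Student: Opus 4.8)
The plan is to write down the candidate $u$ explicitly — namely the formula \eqref{ExS} on the chamber $C=\{x_1\ge x_2\ge x_3\}$, extended to all of $\RR^3$ by requiring symmetry in $x_1,x_2,x_3$ — then verify that it is a classical solution of the stationary PDE \eqref{Equa} with $\varphi(x)=\max\{x_1,x_2,x_3\}$ which is uniformly continuous and has at most linear growth, and finally invoke the uniqueness part of Theorem \ref{comp-elliptic} to conclude that it is \emph{the} viscosity solution. Since the PDE, the growth and continuity requirements, and the admissible class are all invariant under permutations of the coordinates, it suffices to do all computations on $C$ and to understand the matching of the formula across the two faces $\{x_1=x_2\}$ and $\{x_2=x_3\}$ of $C$.

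It is convenient to set $a=x_2-x_1$ and $b=2x_3-x_1-x_2$ (both nonpositive on $C$) and $E_1=\tfrac1{2\sqrt2}e^{\sqrt2\,a}$, $E_2=\tfrac1{6\sqrt2}e^{\sqrt2\,b}$, so that $u=x_1+E_1+E_2$ on $C$. Because $a$ and $b$ are affine, $D^2u=2E_1(\nabla a)(\nabla a)^{T}+2E_2(\nabla b)(\nabla b)^{T}$ on $C$, where $\nabla a=(-1,1,0)$ and $\nabla b=(-1,-1,2)$; in particular $\sum_k\partial_ku\equiv1$, matching translation invariance. Two things then need checking. First, that the glued function is in fact $C^2(\RR^3)$: across $\{x_1=x_2\}$ the normal direction is parallel to $\nabla a$ and orthogonal to $\nabla b$, which forces the mixed normal/tangential second derivatives to vanish on the face, while the normal/normal and tangential/tangential entries agree with those of the reflected piece by the coordinate symmetry; across $\{x_2=x_3\}$ one has $a=b$ and $\nabla a-\nabla b$ parallel to the normal, and the same conclusion follows. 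Second, that the PDE holds on the interior of $C$: enumerating the eight vectors $v\in\{0,1\}^3$ gives $\langle D^2u\cdot v,v\rangle=2E_1\langle\nabla a,v\rangle^2+2E_2\langle\nabla b,v\rangle^2$ and hence $\max_{v}\langle D^2u\cdot v,v\rangle=\max\{2E_1+2E_2,\,8E_2\}$; on $C$ one has $a\ge b$ (equivalently $x_2\ge x_3$), so $E_1\ge3E_2$, the maximum equals $2E_1+2E_2$, and therefore $\tfrac12\max_v\langle D^2u\cdot v,v\rangle=E_1+E_2=u-x_1=u-\varphi$. Thus the PDE holds on the interior of $C$, hence on every chamber by symmetry, hence on all of $\RR^3$ by continuity of both sides (using $u\in C^2$). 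Finally, on $C$ one has $0<E_1+E_2\le\tfrac1{2\sqrt2}+\tfrac1{6\sqrt2}$ with bounded gradient, so $u-\varphi$ is globally bounded and $u$ is globally Lipschitz; in particular $u$ is uniformly continuous with at most linear growth, and Theorem \ref{comp-elliptic} identifies it as the unique viscosity solution.

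The one step that needs genuine care is the regularity claim: one must confirm that patching the explicit formula across the chamber walls yields a $C^2$ function — not merely a continuous or $C^1$ one — since otherwise the pointwise verification of the PDE would have to be replaced by a more delicate check of the viscosity sub- and supersolution inequalities at the interfaces. Once that is in hand, the remainder is the short computation above, organized around the variables $a$ and $b$.
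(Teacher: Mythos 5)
Your proposal is correct and follows essentially the same route as the paper: verify that the symmetrized explicit formula is a $C^2$ classical solution of \eqref{Equa} with linear growth and uniform continuity, then invoke the uniqueness in Theorem \ref{comp-elliptic}. Your rank-one decomposition of the Hessian via $a=x_2-x_1$, $b=2x_3-x_1-x_2$ and the full enumeration of the eight vectors $v\in\{0,1\}^3$ is a slightly cleaner organization of the same computation; the only detail left implicit that the paper treats separately is the $C^2$ regularity where the chamber walls meet (the line $x_1=x_2=x_3$), which the paper handles by a Taylor expansion at the origin.
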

\begin{proof}
Since by Theorem \ref{comp-elliptic} the PDE \eqref{Equa} has a unique at most linear growth solution, all we need to do is verify that $u(x)$, which has linear growth, is a $C^2$ solution.

First, let us establish that the expression $u(x)$ is a solution within a quadrant. One can differentiate the formula to find the first derivatives:
\begin{eqnarray*}
\partial_1 u &=& 1 -\frac{1}{2}e^{\sqrt{2}(x_2-x_1)} - \frac{1}{6}e^{\sqrt{2}(2x_3-x_2-x_1)},\\
\partial_2 u &=&  \frac{1}{2}e^{\sqrt{2}(x_2-x_1)} - \frac{1}{6}e^{\sqrt{2}(2x_3-x_2-x_1)},\\
\partial_3 u &=&  \frac{2}{6}e^{\sqrt{2}(2x_3-x_2-x_1)}.
\end{eqnarray*}
We see that indeed $\partial_1 u+\partial_2 u + \partial_3 u = 1$ as expected.
The interesting $v$ are $(1,0,0), (0,1,0)$, and $(0,0,1)$, i.e.
$$\mathcal{L}(u) = - \frac{1}{2}\max\{\partial_1^2u, \partial_2^2u,\partial_3^2u, 0\}$$
and we find second derivatives
\begin{eqnarray*}
\partial_{11} u &=&  \frac{1}{\sqrt{2}}e^{\sqrt{2}(x_2-x_1)} + \frac{1}{3\sqrt{2}}e^{\sqrt{2}(2x_3-x_2-x_1)}\\
\partial_{22} u &=&  \frac{1}{\sqrt{2}}e^{\sqrt{2}(x_2-x_1)} + \frac{1}{3\sqrt{2}}e^{\sqrt{2}(2x_3-x_2-x_1)}\\
\partial_{33} u &=&  \frac{2 \sqrt{2}}{3}e^{\sqrt{2}(2x_3-x_2-x_1)}.
\end{eqnarray*}
Hence, in this quadrant $$\partial_{11}u=\partial_{22}u \geq \partial_{33}u \geq 0.$$
Plugging into the PDE, we establish that
$$u(x)=x_1 + \frac{1}{2\sqrt{2}}e^{\sqrt{2}(x_2-x_1)} + \frac{1}{6\sqrt{2}}e^{\sqrt{2}(2x_3-x_2-x_1)}= \max\{x_1,x_2, x_3\} - \mathcal{L}(u),$$
hence $u(x)$ is a solution to the PDE in this quadrant, and by symmetry in all quadrants.

All we have left to show is that the expression $u(x)$ stays $C^2$ across the surfaces bounding the quadrants, and at the origin.
Observe that the expression is $C^2$, with bounded third derivatives away from the surfaces ($x_1=x_2>x_3$ and $x_1>x_2=x_3$).
The expression is symmetric across the surfaces, and even, as
$$\partial_1 u|_{x_1=x_2}=1 -\frac{1}{2}e^{0\sqrt{2}} - \frac{1}{6}e^{\sqrt{2}(2x_3-x_1-x_1)}= \frac{1}{2}- \frac{1}{6}e^{\sqrt{2}(2x_3-x_1-x_1)} =\partial_2 u|_{x_1=x_2} $$
and
$$\partial_2 u|_{x_2=x_3}=\frac{1}{2}e^{\sqrt{2}(x_2-x_1)} - \frac{1}{6}e^{\sqrt{2}(x_3-x_1)}=\frac{2}{6}e^{\sqrt{2}(x_3-x_1)}=\partial_3 u|_{x_2=x_3} .$$
Because the expression is even, it is $C^2$ across these surfaces. It remains to show that $u$ is $C^2$ at the origin. Let us consider the Taylor expansion of the function in the quadrant $x_1 \geq x_2 \geq x_3$.
It is $$u(x)= \frac{\sqrt{2}}{3}+\frac{x_1+x_2+x_3}{3}+ \frac{\sqrt{2}}{3} [x_1^2+x_2^2+x_3^2 -(x_1x_2 + x_1x_3+x_2x_3)] + ...,$$
which is a symmetric function up to second order, with bounded third derivatives. By symmetry, the second order part of the Taylor expansion is the same in other sectors. Thus the formula is $C^2$ at the origin. Thus the function is $C^2$ at the origin, as well as everywhere else.
We established that $u(x)$ is a $C^2$ solution of the PDE.
\end{proof}

Now that we have presented the solution to the $n=3$ geometric stopping problem, we analyze which strategy the solution corresponds to (see subsection \ref{OptStr}). On the quadrant $x_1 \geq x_2 \geq x_3$, the solution $u$ has:
\begin{equation*}
\frac{1}{2}\max_{v\in\{0,1\}^n} \langle D^2u\cdot v, v\rangle = \frac{1}{2}\partial_{11}u = \frac{1}{2}\partial_{22}u.
\end{equation*}
Since $\partial_{11}u=\langle D^2u\cdot v, v\rangle $ when $v=(1, 0, 0)$ and $\partial_{22}u=\langle D^2u\cdot v, v\rangle $ when $v=(0, 1,  0),$ we see (via the discussion in Section \ref{OptStr}) that the market has two optimal strategies:
\begin{itemize}
\item choose $(1, 0, 0)$ and $(0, 1, 1)$ with probability $1/2$ each, or
\item choose $(0, 1, 0)$ and $(1, 0, 1)$ with probability $1/2$ each.
\end{itemize}
%
%

How could one find the explicit solution \eqref{ExS}? Well, suppose we know the optimal strategy $v^*$ on a region $\sigma$. Then,
\begin{equation*}
u(x) = \frac{1}{2} \langle D^2u\cdot v^*, v^*\rangle + \varphi(x)
\end{equation*}
is the corresponding PDE (or ODE). We know that the solutions of $u(x) =  \frac{1}{2}\partial_{kk}u+ \varphi(x)$ involve exponentials, so we expect a solution of the form
$$ u(x) = \varphi(x) + \sum c_k \exp (a_{k1} x_1 + ... + a_{kn}x_n).$$ The boundary condition of at most linear growth at infinity helps rule out the exponentials that grow at infinity, whereas the boundary conditions on the walls $x_1=x_2$, $x_2=x_3$ helps one determine the explicit solution formula.

\subsection{The Geometric Stopping Case with $n=4$ is different}

It is natural to ask whether the geometric stopping case with $n=4$ experts
(and $\varphi(x) = \max\{x_1, x_2, x_3, x_4\}$) can be solved explictly by
making an educated guess based on what we just did for $n=3$. We show in this section that
the answer is no. (In fact an exact solution for the geometric stopping case with $n=4$ experts
is now known; it was found by \cite{BEZ}, using our PDE characterization and arguments much
more involved than those in this section.)

Recall that for $n=3$, one of the market's two optimal strategies in the sector $x_1>x_2>x_3$ was to
advance the leading expert (i.e. take $v=(1,0,0)$) with probability $1/2$, and to advance everyone else
(i.e. take $v=(0,1,1)$) with probability $1/2$. With this in mind, we ask whether for $n=4$ it would be optimal
in the sector $x_1>x_2>x_3>x_4$ for the market to advance the leading expert with probability $1/2$ and
advance everyone else with probability $1/2$. If so, then in this sector the value function would satisfy
$$
u(x) =  \frac{1}{2}\partial_{11}u+ \max\{x_1, x_2, x_3, x_4\}
$$
It must also have linear growth at infinity, and at the sector's boundaries symmetry demands that
$\partial_1u=\partial_2u$ when $x_1=x_2$, $\partial_2u=\partial_3u$ when $x_2=x_3$,
and $\partial_3u=\partial_4u$ when $x_3=x_4.$ These conditions fully determine the
function; after some calculation, one obtains
\begin{equation}
u_4(x) = x_1 + \frac{1}{2\sqrt{2}}e^{\sqrt{2}(x_2-x_1)} + \frac{1}{6\sqrt{2}}e^{\sqrt{2}(2x_3-x_2-x_1)} + \frac{1}{12\sqrt{2}}e^{\sqrt{2}(3x_4 - x_3-x_2-x_1)}.
\end{equation}
We shall show that the proposed strategy is not optimal (and $u_4$ is not the value function in the
sector $x_1>x_2>x_3>x_4$) by showing that
$\partial_{11} u_4  \neq \max_{v\in\{0,1\}^n} \langle D^2u_4\cdot v, v\rangle$ in part
of this sector. It suffices to show that
$$
\partial_{11}u_4 < (\partial_{1}+ \partial_2)^2u_4
$$
in part of the sector. Explicit calculation gives
\begin{align*}
\partial_{11}u_4 =&  \frac{1}{\sqrt{2}}e^{\sqrt{2}(x_2-x_1)} + \frac{1}{3\sqrt{2}}e^{\sqrt{2}(2x_3-x_2-x_1)} + \frac{1}{6\sqrt{2}}e^{\sqrt{2}(3x_4 - x_3-x_2-x_1)},\\
\partial_{22}u_4 =&  \frac{1}{\sqrt{2}}e^{\sqrt{2}(x_2-x_1)} + \frac{1}{3\sqrt{2}}e^{\sqrt{2}(2x_3-x_2-x_1)} + \frac{1}{6\sqrt{2}}e^{\sqrt{2}(3x_4 - x_3-x_2-x_1)}, \ \mbox{and}\\
\partial_{12}u_4 =& - \frac{1}{\sqrt{2}}e^{\sqrt{2}(x_2-x_1)} + \frac{1}{3\sqrt{2}}e^{\sqrt{2}(2x_3-x_2-x_1)} + \frac{1}{6\sqrt{2}}e^{\sqrt{2}(3x_4 - x_3-x_2-x_1)}.
\end{align*}
Therefore
$$
(\partial_{1}+ \partial_2)^2u_4 = \partial_{11}u_4 + 2\partial_{12}u_4 + \partial_{22}u_4 =  \frac{4}{3\sqrt{2}}e^{\sqrt{2}(2x_3-x_2-x_1)} + \frac{4}{6\sqrt{2}}e^{\sqrt{2}(3x_4 - x_3-x_2-x_1)}.
$$
Evidently, when $x_1=x_2=x_3=x_4$, $\partial_{11} u_4 = \frac{3}{2\sqrt{2}}$,
which is strictly smaller than $(\partial_1 + \partial_2)^2 u_4 = \frac{2}{\sqrt{2}}$. So
(by continuity)
$$
\partial_{11}u_4 < (\partial_{1}+ \partial_2)^2u_4
$$
in part of the sector near $x_1=x_2=x_3=x_4$. Thus the proposed strategy is
not optimal, and $u_4$ is not the value function in this sector.

\newpage

\bibliography{MStrategies}
\bibliographystyle{ieeetr}

\end{document}